\documentclass[a4paper]{amsart}

\usepackage{lmodern}
\usepackage{stmaryrd}
\SetSymbolFont{stmry}{bold}{U}{stmry}{m}{n}
\usepackage{amsmath}
\usepackage{amsfonts}
\usepackage{amssymb}
\usepackage{amsthm}
\usepackage{aliascnt}
\usepackage{bbm} 
\usepackage{pbox}
\usepackage{booktabs}
\usepackage{arydshln}
\usepackage{pinlabel, caption, subcaption}
\usepackage{esvect}
\usepackage{cals}
\usepackage{verbatim}
\usepackage{xfrac}
\usepackage[backend=biber,style=alphabetic,sorting=nty]{biblatex}
\newcommand{\cF}{\mathcal{F}}

\newcommand{\bF}{\mathbb{F}}

\newcommand{\bZ}{\mathbb{Z}}
\newcommand{\Z}{\mathbb{Z}}

\newcommand{\R}{\mathbb{R}}

\usepackage{booktabs}
\usepackage[dvipsnames,svgnames,x11names]{xcolor}
\usepackage{mathtools,url,graphicx,nicefrac}
\usepackage[colorlinks,citecolor=blue,linkcolor=blue,urlcolor=blue,filecolor=blue]{hyperref}
\usepackage{microtype}
\usepackage[margin=1.25in]{geometry}
\usepackage{pdflscape}

\usepackage{tikz, tikz-cd}
\usetikzlibrary{matrix,arrows,decorations.pathmorphing}
\usetikzlibrary{arrows,decorations.pathmorphing,backgrounds,fit,positioning,shapes.symbols,chains,calc}

\numberwithin{theoremcounter}{section}
\newaliascnt{theoremauto}{theoremcounter}

\newaliascnt{Defauto}{theoremcounter}

\newaliascnt{exampleauto}{theoremcounter}

\newaliascnt{lemmaauto}{theoremcounter}

\newaliascnt{propositionauto}{theoremcounter}

\newaliascnt{corollaryauto}{theoremcounter}

\newaliascnt{remarkauto}{theoremcounter}

\newaliascnt{notationauto}{theoremcounter}

\newaliascnt{claimauto}{theoremcounter}

\newaliascnt{warningauto}{theoremcounter}

\newaliascnt{questionauto}{theoremcounter}

\newaliascnt{discussionauto}{theoremcounter}

\newaliascnt{computationauto}{theoremcounter}

\newaliascnt{conjectureauto}{theoremcounter}

\newaliascnt{convauto}{theoremcounter}

\newtheorem{theorem}[theoremauto]{Theorem}

\newtheorem{lemma}[lemmaauto]{Lemma}
\newtheorem{proposition}[propositionauto]{Proposition}
\newtheorem{corollary}[corollaryauto]{Corollary}
\newtheorem*{corollary*}{Corollary}

\newtheorem{atheorem}{Theorem}

\theoremstyle{definition}
\newtheorem{definition}[Defauto]{Definition}

\newtheorem{example}[exampleauto]{Example}
\theoremstyle{remark}

\newcommand{\HFI}{\mathbb{H}^{\text{FI}}}
\newcommand{\FIi}{\mathbb{H}^{\text{FI}}_{i}}
\newcommand{\FIiC}{\mathbb{H}^{\text{FI}}_{i}(C^{-*}}

\DeclareMathOperator*{\sgn}{sgn}
\DeclareMathOperator{\stab}{stab}

\DeclareMathOperator{\PConf}{PConf}
\DeclareMathOperator{\UConf}{UConf}
\DeclareMathOperator{\Br}{Br}
\DeclareMathOperator{\PBr}{PBr}
\DeclareMathOperator{\RP}{\mathbb{R}P}
\DeclareMathOperator{\HH}{H}
\DeclareMathOperator{\FI}{FI}

\title[Stability Patterns for Spherical and Projective Braid Groups]{Stability Patterns for Spherical and Projective \\ Braid Groups}

\author{Sarah Anderson}
\email{ande1324@purdue.edu}
\address{Department of Mathematics, Purdue University, 150 North University, West Lafayette 47907, United States}

\AtBeginDocument{%
	\def\MR#1{}}

\date{\today}

\begin{document}
	
\begin{abstract} 
McDuff and Segal proved homological stability for unordered configuration spaces of connected manifolds with non-empty boundary. Later, Church proved representation stability for ordered configuration spaces on compact manifolds. These stability patterns extend to surface braid groups and pure surface braid groups, respectively, whenever the associated configuration space is an Eilenberg--Maclane space. However, these results do not apply to braid groups and pure braid groups on $S^2$ and $\RP^2$ since ordered and unordered configuration spaces on these surfaces are not Eilenberg--Maclane spaces. This paper will use results by Cohen--Pakianathan and the theory of $\FI$-homology to extend known stability patterns to these unknown cases. This is a special case of a more general result proven in this paper: the equivariant homology of ordered and unordered configuration spaces exhibit representation stability and stable periodicity respectively.

\end{abstract}

\maketitle


\section{Introduction}
 
The purpose of this paper is to prove stable periodicity for the group homology of the surface braid groups of $S^2$ and $\RP^2$ as well as representation stability for the group homology of the pure surface braid groups of $S^2$ and $\RP^2$. We will begin by reviewing three different types of stability patterns and their current known results for configuration spaces: homological stability, representation stability, and stable periodicity.

\subsection{Stability Patterns for Configuration Spaces}
Let $M$ be a connected manifold with $dim(M) \ge 2$. We define the ordered configuration spaces on $n$ points as follows: $$\PConf_n(M)=\{(x_0,x_1,..,x_{n-1}) \in M^n | x_i \neq x_j \text{ if } i \neq j\}.$$ Let $S_n$ be the symmetric group on $n$ elements. Then we define the unordered configuration space on $n$ points as follows: $$\UConf_n(M) = \PConf_n(M) \slash S_n.$$

The following homological stability theorem was first proven by Arnol'd \cite{MR274462} in 1970 for $M=\R^2$. In 1975, McDuff \cite{MR358766} proved this result for general manifolds without an explicit range. Later, in 1979, Segal \cite{MR533892} generalized Arnol'd's results to general manifolds with an explicit range.
\begin{theorem}[McDuff, Segal]
    Let $M$ be a connected non-compact manifold. Then $$\HH_i(\UConf_n(M)) \cong \HH_i(\UConf_{n+1}(M))$$ for $n \geq 2i$.
\end{theorem}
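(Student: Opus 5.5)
The approach follows the classical argument of McDuff and Segal: build stabilization maps that add a point near the boundary or near infinity, and prove they induce isomorphisms in a range. There are three stages: construct the map, show it is split injective on homology, then get surjectivity in the range $n \geq 2i$.

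\textbf{Construction.} Since $M$ is connected and non-compact, we may (after isotopy) find an embedding $M \cup_{\partial} ([0,1)\times \R^{d-1}) \hookrightarrow M$ isotopic to the identity. Equivalently, $M$ contains an ``end'' into which configurations can be pushed. Using this, define
\[
\sigma_n\colon \UConf_n(M)\to \UConf_{n+1}(M)
\]
by pushing the configuration slightly into the interior and adding one new point in the freed collar. Different choices give homotopic maps, and the composites $\sigma_{n+1}\circ\sigma_n$ commute up to homotopy with the swap of the two added points.

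\textbf{Split injectivity.} Use a transfer-type map $\tau\colon \HH_*(\UConf_{n+1}(M))\to \HH_*(\UConf_n(M))$. It is induced by the correspondence that forgets one point, i.e.\ the $(n+1)$-sheeted covering $\PConf_{n+1}(M)/(S_n\times S_1)\to \UConf_{n+1}(M)$, followed by the forgetful map. More cleanly, one can work with the symmetric-product-style maps $\UConf_{n}\to \mathrm{Sym}$ subsets, as in McDuff. A standard combinatorial check shows that the composites $\tau\circ\sigma$ are unitriangular when we consider $\bigoplus_k \HH_*(\UConf_k(M))$ filtered by $k$. It follows that $\sigma_*$ is injective in all degrees; with integral coefficients we argue via the Dold-style splitting used by McDuff.

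\textbf{Surjectivity in the range, which I expect to be the main obstacle.} I would use the Randal-Williams/Segal approach via a semi-simplicial resolution. Let $X_\bullet(n)$ be the augmented semi-simplicial space whose $p$-simplices are configurations of $n$ points together with $p+1$ ordered points of the configuration, each joined by disjoint arcs to the boundary or end. One shows $\lVert X_\bullet(n)\rVert\to \UConf_n(M)$ is $(n-1)$-connected; this is an arc-complex connectivity argument, and it is where non-compactness is used. Each space of $p$-simplices is homotopy equivalent to $\UConf_{n-p-1}(M)$, and the face maps are all homotopic to stabilization maps. The resulting spectral sequence
\[
E^1_{p,q}=\HH_q(\UConf_{n-p-1}(M))\Rightarrow \HH_{p+q+1}(\text{rel})
\]
has $d^1$ given by alternating sums of $\sigma_*$, i.e.\ either $0$ or $\sigma_*$. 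An induction on $n$ then yields surjectivity of $\HH_i(\UConf_n)\to\HH_i(\UConf_{n+1})$ for $n\ge 2i$. The bookkeeping of the slope-$2$ range in this induction, together with the connectivity estimate, is the delicate point; injectivity from the previous step handles the other half.
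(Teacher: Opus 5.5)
The paper does not prove this statement. It is quoted as background from Arnol'd, McDuff and Segal, so there is no internal argument to compare against.

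Your outline is a correct sketch of the standard modern proof, but it is not the route of the cited sources. You combine McDuff's transfer/Dold-lemma splitting (injectivity in all degrees, with $\Z$ coefficients) with Randal-Williams's resolution by arcs to the boundary (surjectivity for $n\ge 2i$). McDuff's original paper is instead built around scanning and the comparison with section spaces, and gives no explicit range; Segal supplied the range later. The resolution route buys the slope-$2$ range integrally from a single connectivity input: general position in dimension $\ge 3$, Hatcher--Wahl arc complexes in dimension $2$.

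On its own, the spectral sequence argument gives surjectivity for $n\ge 2i$ but injectivity only for $n\ge 2i+1$. Your split injectivity is exactly what settles the edge case $n=2i$ that the statement requires. The bookkeeping you flag does close up. In the resolution of $\UConf_{n+1}(M)$, with $E^1_{p,q}=\HH_q(\UConf_{n-p}(M))$, the only entries that can hit $E_{-1,i}$ by higher differentials are $E^2_{p,i-p}$ with $p\ge 1$.
\begin{itemize}
\item For even $p$, this entry is a kernel of $\sigma_*$, which vanishes by your second step.
\item For odd $p$, it is the cokernel of $\sigma_*\colon \HH_{i-p}(\UConf_{n-p-1}(M))\to \HH_{i-p}(\UConf_{n-p}(M))$. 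This vanishes by induction, because $2(i-p)\le n-p-1$ whenever $2i\le n$ and $p\ge 1$.
\end{itemize}

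A few points to tighten.
\begin{itemize}
\item For an arbitrary non-compact $M$, the expression $M\cup_\partial([0,1)\times\R^{d-1})$ only makes sense after you use a proper ray to exhibit $M$ as the interior of a manifold whose boundary contains a chart $\R^{d-1}$. The arcs in $X_\bullet(n)$ should then end in a fixed contractible piece of that boundary. That is what makes the $p$-simplices equivalent to $\UConf_{n-p-1}(M)$ and the face maps homotopic to $\sigma$.
\item The claim that different choices give homotopic stabilization maps holds only for choices at the same end. On $\mathbb{C}^*$, adding a point near the puncture $0$ versus near $\infty$ gives maps $\UConf_1\to\UConf_2$ that differ on $\HH_1$. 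The discriminant $(z_1-z_2)^2$ winds twice in one case and not at all in the other. So fix one end once and for all.
\item The single forget-one-point transfer is not a left inverse. One has $\tau\sigma_*=\mathrm{id}+\sigma_*\tau'$, where $\tau'$ is the transfer one level down. Dold's lemma needs the whole family of transfers to $k$-point subsets, which is presumably what your unitriangularity remark intends.
\end{itemize}
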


However, these results no longer hold when the manifold is compact. Fadell--Van Buskirk \cite{MR141128} demonstrated that failure for $S^2$ as seen in the following example. 

\begin{example}[Fadell--Van Buskirk]
    For $n \geq 2$, $$\HH_1(\UConf_n(S^2)) \cong \Z/(2n-2).$$
\end{example}

The above isomorphism shows that the homology of unordered configurations spaces on $S^2$ is dependent on n, thus $\UConf_n(S^2)$ does not have homological stability. However, R. Nagpal \cite[Theorem F]{MR3358218} (and independently, Cantero--Palmer \cite{MR3398727}) proved that unordered configuration spaces on compact manifolds do exhibit a stability pattern known as stable periodicity. Kupers--Miller \cite{MR3556286} improved the range.  

\begin{theorem}[Cantero--Palmer, Nagpal, Kupers--Miller]
    Let $p$ be a prime number and $M$ be a connected, compact, orientable manifold with $dim(M) \ge 2$. If $n \ge 2i$, then  $$H_i(\UConf_n(M), \bF_p) \cong H_i(\UConf_{n+p}(M), \bF_p).$$
\end{theorem}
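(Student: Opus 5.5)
The plan follows Nagpal's approach through $\FI$-modules, combined with a comparison between ordered and unordered configuration spaces over $\bF_p$.

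\textbf{Step 1: representation stability for the ordered spaces.} Fix $i$. The groups $n\mapsto H^i(\PConf_n(M);\bF_p)$ assemble into an $\FI$-module; on a compact manifold the maps come from a cohomology transfer/forgetting construction, as in Church's work. By the Totaro/Leray spectral sequence for the inclusion $\PConf_n(M)\subset M^n$, this $\FI$-module is a subquotient of $\FI$-modules built from $H^*(M)^{\otimes}$ and the cohomology of $\PConf_n(\R^d)$. These are finitely generated in degree $\le 2i$, or $\le i$ for $d\ge3$. By Noetherianity of $\FI$-modules over $\bF_p$, each page and hence $E_\infty$ is finitely generated. The generation and presentation degrees are controlled by the known bounds, which yields stability degree roughly $2i$.

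\textbf{Step 2: passing to $S_n$-invariants and the unordered space.} Over $\bF_p$ the transfer no longer identifies $H^*(\UConf_n)$ with invariants, since $p$ may divide $n!$. So instead use the Cartan--Leray spectral sequence
\[E_2^{a,b}=H^a(S_n;H^b(\PConf_n(M);\bF_p))\Rightarrow H^{a+b}(\UConf_n(M);\bF_p).\]
Nagpal's theorem states that for a finitely generated $\FI$-module $V$ over $\bF_p$, the groups $H^a(S_n;V_n)$ are eventually periodic in $n$ with period a power of $p$. Apply it to each $E_2$ term, and argue that the differentials are compatible with the $\FI$-structure, i.e.\ that the spectral sequence is natural in $n$. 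This gives eventual periodicity of $H^i(\UConf_n(M);\bF_p)$. Universal coefficients then converts the statement to homology.

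\textbf{Step 3: sharpening to period exactly $p$ in the range $n\ge 2i$.} Nagpal's method alone gives only an unspecified power of $p$ and range. To obtain period $p$ starting at $n\ge 2i$, I would switch to the Kupers--Miller/Cantero--Palmer approach. Orientability gives a nonvanishing vector field on the punctured manifold, so the scanning/local-to-global argument lets one write $\UConf_n(M)$ via a section space of a bundle with fiber $\Omega^{d}S^d$-type labels. Using the stabilization map $M\setminus\mathrm{pt}\to M$ and the replication map $\UConf_n\to\UConf_{pn}$-style constructions, ``adding $p$ points'' becomes a map of mapping-space components. Over $\bF_p$ this map induces an isomorphism, because the obstruction (multiplication by $p$ on the degree) vanishes. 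The explicit range then comes from homological stability of the punctured manifold, namely the McDuff--Segal theorem with $n\ge 2i$, run through a cellular filtration of $M$ relative to a disk.

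I expect the main obstacle to be Step 3, specifically constructing a map $\UConf_n(M)\to\UConf_{n+p}(M)$ (or a zigzag) that is defined on a closed manifold and is an $\bF_p$-homology isomorphism in the stated range. My first attempt would use Kupers--Miller's ``$p$-fold replication'' of a point in a collar, compared through the filtration by the number of points in a removed disk, and check the isomorphism on associated graded pieces using the stable range for $M$ minus a disk.
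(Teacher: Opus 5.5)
The paper does not prove this statement. It is quoted as background from Cantero--Palmer, Nagpal and Kupers--Miller, so there is no internal argument to compare with. Judged on its own, your proposal has a genuine gap: nothing in it actually produces period exactly $p$ or the range $n\ge 2i$.

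\textbf{Steps 1--2.} These cannot reach the statement even if completed. The $\FI$/Nagpal route only gives eventual periodicity with period some power $p^j$ that grows with $i$. This is exactly what the paper itself extracts from $\FI$-methods in its corollary on $\UConf_n(M)\sslash G$: period $p^j$ larger than about $4i$, valid only for $n\ge 8i+1$. Step 2 also fails as written, even for eventual periodicity. Nagpal's theorem gives abstract isomorphisms $\HH^a(S_n;V_n)\cong\HH^a(S_{n+p^j};V_{n+p^j})$, but nothing makes them a map of Cartan--Leray spectral sequences. The co-$\FI$ maps $\PConf_{n+1}(M)\to\PConf_n(M)$ are only $S_n$-equivariant, the acting group changes with $n$, and on a closed manifold there is no stabilization map $\UConf_n(M)\to\UConf_{n+p^j}(M)$. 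Periodic $E_2$-terms with uncontrolled differentials do not give a periodic abutment. You would have to argue at the cochain level, or construct explicit transfer maps of spectral sequences.

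\textbf{Step 3.} This step therefore has to carry the whole statement, but it lists ingredients rather than giving an argument.
\begin{itemize}
\item You construct no map or zigzag between $\UConf_n(M)$ and $\UConf_{n+p}(M)$.
\item The replication maps you invoke send $n$ to $kn$, not to $n+p$.
\item The sentence ``the obstruction (multiplication by $p$ on the degree) vanishes'' names no space, class or mechanism. Something specific is needed, because the components $\Gamma_n(M)$ of the section space really are different: for $S^2$ their $\pi_1$ is $\Z/(2n-2)$.
\item Orientability is not what gives a nonvanishing vector field on $M\setminus\mathrm{pt}$; every open manifold has one. Its role is to make $\pi_0$ of the section space equal to $\Z$, indexed by degree.
\item The McDuff--Segal range for $M\setminus\mathrm{pt}$ does not transfer to $M$. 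The inclusion $\UConf_n(M\setminus\mathrm{pt})\to\UConf_n(M)$ is not a homology isomorphism in a range, as $S^2$ already shows on $\HH_1$.
\end{itemize}

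\textbf{What is missing.} You need two things:
\begin{enumerate}
\item a scanning theorem for the closed manifold itself (McDuff), identifying $\HH_i(\UConf_n(M))$ with $\HH_i(\Gamma_n(M))$ in a range;
\item a proof that $\HH_*(\Gamma_n(M);\bF_p)$ depends only on $n$ mod $p$.
\end{enumerate}

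For (2), one route is to restrict to a disk $D$. Then $\Gamma_n(M)\to\Gamma(D)\simeq S^d$ is a fibration. Its fibers, namely sections on $M\setminus\mathrm{int}\,D$ equal to $\infty$ on $\partial D$, can be identified for all $n$ using the grouplike collar action. After this identification the clutching class in $\pi_{d-1}$ depends affinely on $n$; for $S^2$ it is $\pm(2n-2)$ times a generator of $\pi_3(S^2)$. The Hurewicz map is additive, so the Wang sequences with $\bF_p$-coefficients for $n$ and $n+p$ have the same connecting maps. Hence the homology groups are abstractly isomorphic. That is all the statement asserts, so you never need a map $\UConf_n(M)\to\UConf_{n+p}(M)$.
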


It is well known that the homology of ordered configuration spaces does not exhibit homological stablility or even periodicity. For example, $$\HH_1(\PConf_n(\R^2))=\Z^{\frac{n(n-1)}{2}}.$$ One can see, though, that this homology does exhibit a predictable pattern. In 2011, Church proved that ordered configuration spaces exhibit yet another type of stability called representation stability \cite{Church_2011}, which implies polynomial growth and multiplicity stability.

The following representation stability results utilize the language of $\FI$-modules, which will be reviewed in Section 2. See \cite{Church_2011,MR3357185,MR3285226, MR3818071,miller2026fihyperhomologyorderedconfigurationspaces}.

\begin{theorem}[Church, Church--Ellenberg--Farb, Church--Ellenberg--Farb--Nagpal, Church--Miller--Nagpal--Reinhold, Miller--Wilson]
Let $M$ be a connected manifold of dimension $\geq 2$. For $i \geq0$, $$\HH^i(\PConf_n(M))$$ is generated in degree $\le 4i$ and presented in degree $\le 4i+1$.
\end{theorem}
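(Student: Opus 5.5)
The statement is a compilation of known results. I would prove it by reducing it to the FI-module structure of the Leray spectral sequence for the inclusion $\PConf_n(M)\hookrightarrow M^n$. Throughout I regard $n\mapsto \HH^i(\PConf_n(M))$ as an $\FI$-module, with maps induced by forgetting points. Generation and presentation degree can be read off from $\FI$-homology: generation in degree $\le d$ means $\HH^{\FI}_0$ vanishes above degree $d$, and presentation in degree $\le r$ means $\HH^{\FI}_1$ also vanishes above degree $r$. So the goal is to bound $\HH^{\FI}_0$ and $\HH^{\FI}_1$ of $\HH^i(\PConf_\bullet(M))$ by $4i$ and $4i+1$ respectively.

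\textbf{Step 1: the spectral sequence and its $E_2$ page.} I would use the Totaro/Cohen--Taylor spectral sequence for $\PConf_n(M)\subset M^n$. Its $E_2$ page is built from $\HH^*(M)^{\otimes n}$ together with generators $G_{ab}$ in degree $\dim M-1$, modulo the Arnold relations. Each entry $E_2^{p,q}$ is naturally an $\FI$-module. Rationally, or after a standard filtration argument, it is a sum of induced modules $\mathrm{Ind}_{S_k}^{\FI}W_k$.

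\textbf{Step 2: degree bounds on $E_2$.} A class of total degree $i$ involves at most $2i$ points in an essential way. Each $G_{ab}$ contributes degree $\dim M-1\ge 1$ and touches two points, while each class pulled back from $\HH^{>0}(M)$ touches one point. Hence $E_2^{p,q}$ with $p+q=i$ is generated in degree $\le 2i$. The difficulty is that $\HH^0(M)$ factors allow arbitrary points; these are exactly what induction along $\FI$ accounts for. Since the $E_2$ terms are free or nearly free, they are presented in degree $\le 2i$ as well.

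\textbf{Step 3: passing to the abutment.} The differentials are $\FI$-maps. Generation and presentation degrees are controlled under kernels, cokernels and extensions by the Church--Ellenberg and CMNR regularity theorems: if $A\to B\to C$ is exact with known $\HH^{\FI}_0$ and $\HH^{\FI}_1$ bounds, the middle term inherits bounds up to a shift. Each of the finitely many pages that affect total degree $i$ costs such a shift. Combined with the "local degree" estimate of Church--Miller--Nagpal--Reinhold, this yields the stated $4i$ and $4i+1$. The bound $4i$ arises as $2i$ (from $E_2$) plus $2i$ (from at most $i$ differential steps, each costing at most $2$). The refinement to presentation in degree $4i+1$ follows Miller--Wilson's $\FI$-hyperhomology argument on the complex computing the cohomology.

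\textbf{Main obstacle.} Step 3 is the hard part: keeping the degree shifts linear and sharp through the differentials. I would handle it by invoking the CMNR bound on the regularity of $\FI$-module homology, $\deg \HH^{\FI}_j \le \deg\HH^{\FI}_0+\deg \HH^{\FI}_1+j-1$, rather than tracking the shifts by hand. Alternatively, one can simply cite Miller--Wilson's theorem directly, since the statement is attributed to them.
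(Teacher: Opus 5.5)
Your Step~3 is where the argument fails. The accounting ``$2i$ from $E_2$ plus at most $2$ per page'' has no justification, and the regularity theorem you propose to use works against it. Write $t_j$ for the degree of $\HH^{\FI}_j$. To pass from $E_r$ to $E_{r+1}$ you must bound $t_0$ of $K=\ker(d_r\colon A\to B)$. From $0\to K\to A\to \operatorname{im} d_r\to 0$ and $0\to \operatorname{im} d_r\to B\to Q\to 0$ you get $t_0(K)\le \max\bigl(t_0(A),t_1(\operatorname{im} d_r)\bigr)$ and $t_1(\operatorname{im} d_r)\le \max\bigl(t_1(B),t_2(Q)\bigr)$. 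The CMNR bound $t_2(Q)\le t_0(Q)+t_1(Q)+1$ is \emph{additive}.

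So grant that the $E_2$ terms are free with $t_0(A)\le 2i$ and $t_0(B)\le 2i+2$. Even then, this method gives only $t_0(K)\le t_0(A)+t_0(B)+1\le 4i+3$ at the first nonzero differential, already beyond $4i$. On later pages the inputs are these enlarged bounds, so the estimate roughly doubles per page and becomes exponential in $i$. Linear ranges from the spectral sequence need CMNR's local-cohomology machinery applied to the spectral sequence as a whole. You name it but do not use it to produce $4i$ and $4i+1$. Attributing the $4i+1$ to Miller--Wilson's hyperhomology argument does not refine your argument; it replaces it.

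Step~1 has a separate problem. The Totaro/Cohen--Taylor presentation of $E_2$ by $\HH^*(M)^{\otimes n}$ and untwisted classes $G_{ab}$ requires $M$ orientable and a K\"unneth isomorphism. The statement, however, concerns integral cohomology of arbitrary connected manifolds, including nonorientable ones such as $\RP^2$, which is exactly the case this paper needs.

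The paper does not reprove the statement; it quotes it. It is the case of $G$ trivial in Theorem~\ref{Equivariant Rep Stab}, whose proof shows the route. Miller--Wilson prove the chain-level vanishing $\FIi\bigl(C^{-*}(\PConf(M))\bigr)_n=0$ for $n>-2i$. Gu\`es's theorem (Theorem~\ref{Gues generation and presentation}, with $a=2$, $b=0$) converts this vanishing line into generation degree $\le 4i$ and presentation degree $\le 4i+1$ for $\HH^i$. The doubling from $2i$ to $4i$ comes from that single conversion, applied once to the whole cochain complex rather than page by page. Since no $E_2$ page is described, orientability and the coefficient ring play no role. Your closing fallback of citing Miller--Wilson directly is legitimate and is in effect what the paper does, but then Steps~1--3 contribute nothing.
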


\subsection{Stability Patterns for Braid Groups}
The fundamental groups of ordered configuration spaces and unordered configuration spaces are called the pure braid groups and braid groups respectively.

\begin{definition}
    Let $M$ be a connected manifold with $dim(M) \ge 2$. We define the braid groups on $M$ as follows: 
\begin{itemize}
    \item $\PBr_n(M)=\pi_1(\PConf_n(M))$,
    \item $\Br_n(M)=\pi_1(\UConf_n(M))$.
\end{itemize}
\end{definition}

Note that this paper will focus on the case where $dim(M)=2$. This is the most interesting case, since for $dim(M) > 2$, $\pi_1(Conf_n(M)) = (\pi_1(M))^n$. This is because the higher dimentions allow for ``unbraiding''.

For a surface, $\Sigma$, other than $S^2$ or $\RP^2$, $\UConf_n(\Sigma)$ and $\PConf_n(\Sigma)$ are Eilenberg--MacLane spaces so the homology of configuration spaces is isomorphic to the group homology of their associated braid groups. Combining this observation and the previous stability theorems gives us the following stability results for braid groups. 

\begin{corollary}
    Let $\Sigma$ be a connected surface other than $S^2$ or $\RP^2$.  
    \begin{enumerate}
        
        \item Representation Stability \cite{Church_2011}, \cite{miller2026fihyperhomologyorderedconfigurationspaces}: $\HH^i(\PBr_n(\Sigma))$ has generation degree $\le 4i+1$ and presentation degree $\le 4i +2$.
        
        \item Homological Stability \cite{MR274462}, \cite{MR358766}, \cite{MR533892}: Let $\Sigma$ be non-compact. Then $$\HH_i(\Br_n(\Sigma)) \cong \HH_i(\Br_{n+1}(\Sigma))$$ for $n \geq 2i$.
        
        \item Stable Periodicity \cite{MR3398727}, \cite{MR3358218}, \cite{MR3556286}: Let $p$ be a prime. Then for $n \ge 2i,$ $$\HH_i(\Br_n(\Sigma), \bF_p) \cong \HH_i(\Br_{n+p}(\Sigma), \bF_p).$$
    \end{enumerate} 
\end{corollary}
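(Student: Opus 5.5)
The corollary should follow directly from the three stability theorems quoted above, once we identify configuration-space homology with group homology of the braid groups. So the first step is the asphericity input: for a connected surface $\Sigma \neq S^2, \RP^2$, the space $\PConf_n(\Sigma)$ is a $K(\PBr_n(\Sigma),1)$. I would prove this by induction on $n$ using the Fadell--Neuwirth fibrations
$$\Sigma \setminus \{x_1,\dots,x_{n-1}\} \longrightarrow \PConf_n(\Sigma) \longrightarrow \PConf_{n-1}(\Sigma),$$
which are locally trivial fibrations. The base case is $\PConf_1(\Sigma)=\Sigma$, which is aspherical because its universal cover is contractible (the plane). The fiber is a punctured surface, hence aspherical: it is noncompact, so it deformation retracts onto a graph, or it is contractible. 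The long exact sequence of homotopy groups then gives $\pi_k(\PConf_n(\Sigma))=0$ for $k\ge 2$. Since $\PConf_n(\Sigma)\to\UConf_n(\Sigma)$ is a finite covering, $\UConf_n(\Sigma)$ is also aspherical. It follows that $\UConf_n(\Sigma)$ is a $K(\Br_n(\Sigma),1)$, and therefore
$$\HH_i(\Br_n(\Sigma);A)\cong \HH_i(\UConf_n(\Sigma);A), \qquad \HH^i(\PBr_n(\Sigma))\cong \HH^i(\PConf_n(\Sigma))$$
for any coefficients $A$.

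The isomorphisms are natural, and this matters in two places:
\begin{enumerate}
\item For the $\FI$-module statement we need compatibility with the $S_n$-actions and the structure maps. The $S_n$-action on $\PConf_n$ induces, up to inner automorphisms (which act trivially on homology), the $S_n$-action on $\PBr_n$. The forgetful maps $\PConf_{n}\to\PConf_{m}$ attached to injections $[m]\hookrightarrow[n]$ induce homomorphisms of pure braid groups.
\item Hence $\HH^i(\PBr_\bullet(\Sigma))\cong \HH^i(\PConf_\bullet(\Sigma))$ is an isomorphism of $\FI$-modules, and generation and presentation degrees transfer. Item (1) then follows from the representation stability theorem stated above; its bounds are stronger than the ones claimed, so they suffice.
\end{enumerate}

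Items (2) and (3) follow the same way: apply the McDuff--Segal theorem for noncompact $\Sigma$, and the Cantero--Palmer / Nagpal / Kupers--Miller theorem with $\bF_p$ coefficients. The main obstacle is a hypothesis mismatch in item (3). The periodicity theorem quoted above assumes $M$ is compact and orientable, while the corollary allows any $\Sigma$. I would handle the cases as follows:
\begin{enumerate}
\item Noncompact $\Sigma$: homological stability already gives periodicity, so this case is immediate.
\item Compact orientable $\Sigma$: apply the quoted theorem directly.
\item Compact nonorientable $\Sigma$ (other than $\RP^2$): appeal to the versions in Nagpal or Cantero--Palmer that hold with $\bF_p$ coefficients, where orientability is needed only for odd $p$ in some formulations. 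If necessary, restrict the statement accordingly.
\end{enumerate}
Apart from this bookkeeping of hypotheses, the proof is formal.
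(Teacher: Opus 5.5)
Your route is the paper's route. The paper asserts that $\PConf_n(\Sigma)$ and $\UConf_n(\Sigma)$ are Eilenberg--MacLane spaces for $\Sigma\neq S^2,\RP^2$, and then says the three items follow by combining this with the quoted theorems. The following parts of your proposal are correct and more careful than the text:
the Fadell--Neuwirth induction; the finite-cover step; the check that $\HH^i(\PBr_\bullet(\Sigma))\cong\HH^i(\PConf_\bullet(\Sigma))$ respects the $\FI$-structure; and the observation that the bounds $4i+1$, $4i+2$ in (1) follow from the quoted $4i$, $4i+1$.

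Two small points. If $\partial\Sigma\neq\emptyset$, first replace $\Sigma$ by its interior before invoking Fadell--Neuwirth; this does not change configuration spaces up to homotopy. In the noncompact case of (3), passing from $\Z$ to $\bF_p$ coefficients uses the universal coefficient theorem in degrees $i$ and $i-1$; both lie in the stable range when $n\ge 2i$.

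The real gap is item (3) for closed nonorientable surfaces $N_g$ with $g\ge 2$, such as the Klein bottle. Your case split is slightly off. A compact nonorientable surface with nonempty boundary has the same configuration spaces as its interior, so homological stability handles it exactly like your noncompact case. What remains is only the closed nonorientable case.

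There, the periodicity theorem you invoke is stated for orientable $M$. Writing ``appeal to the versions in Nagpal or Cantero--Palmer \dots\ if necessary, restrict the statement'' is not an argument. You would need to cite a specific result giving period exactly $p$ in the range $n\ge 2i$ without an orientability hypothesis.

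The tools of this paper do not close the hole either. The quoted generation and presentation bounds for $\HH^i(\PConf_n(M))$ need no orientability. But feeding them into Gu\`es's proposition with $X_n=\PConf_n(\Sigma)$ yields only $\HH_i(\Br_n(\Sigma);\bF_p)\cong\HH_i(\Br_{n+p^j}(\Sigma);\bF_p)$ for $p^j>4i$ and $n\ge 8i+1$. That is strictly weaker than what (3) claims.

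In fairness, the paper's one-sentence derivation has exactly the same hole, since it never addresses the orientability hypothesis. So you are not missing an idea the paper supplies. Still, neither argument as written proves (3) for closed nonorientable $\Sigma$.
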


We will now extend these results to braid groups and pure braid groups of $S^2$ and $\RP^2$, the two previously unknown cases.



\begin{atheorem} \label{PBr Rep Stab}
    Let $\Sigma$ be $S^2$ or $\RP^2$. For $i \geq 0$, then $$\HH^i(\PBr_n({\Sigma}))$$ is generated in degree $\le 4i$ and presented in degree $\le 4i+1$.
\end{atheorem}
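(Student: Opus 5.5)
We will pass from configuration spaces to $K(\pi,1)$'s using the result of Cohen--Pakianathan mentioned in the abstract. For $\Sigma = S^2$ or $\RP^2$, the universal cover of $\PConf_n(\Sigma)$ (for $n\ge 3$, resp.\ $n \ge 2$) is, up to homotopy, a fixed space: $S^3$ for $S^2$, and $S^3$ (with a modified action) for $\RP^2$. The key input is the Cohen--Pakianathan description of this structure. Let $G = SO(3)$ act diagonally on $\PConf_n(S^2)$, and let $G=SO(3)$ act on $\PConf_n(\RP^2)$ through its isometric action on $\RP^2$. Then the Borel construction $E G \times_{G} \PConf_n(\Sigma)$ is an Eilenberg--MacLane space $K(\PBr_n(\Sigma),1)$, up to passing to the correct cover/extension. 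Equivalently, $\HH^*(\PBr_n(\Sigma)) \cong \HH^*_{G}(\PConf_n(\Sigma))$, the equivariant cohomology. The plan is to make this isomorphism natural in $n$, so that it is an isomorphism of $\FI$-modules (or at least of sequences compatible with the forgetful maps $\PConf_{n+1}\to\PConf_n$ and the $S_n$-actions). We then prove the general statement that $n \mapsto \HH^i_G(\PConf_n(M))$ has generation degree $\le 4i$ and presentation degree $\le 4i+1$.

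For the general statement we use the Serre spectral sequence of the fibration
\[
\PConf_n(\Sigma) \to EG\times_G \PConf_n(\Sigma) \to BG .
\]
Its $E_2$-page is $E_2^{p,q} = \HH^p(BG;\HH^q(\PConf_n(\Sigma)))$. Since $G=SO(3)$ is connected, the local system is trivial. Hence
\[
E_2^{p,q}\cong \HH^p(BG)\otimes \HH^q(\PConf_n(\Sigma)),
\]
and the isomorphism is natural in the $\FI$-structure. For each $p$ this is an $\FI$-module, namely a finite direct sum of copies of $\HH^q(\PConf_n(\Sigma))$, tensored with a finitely generated abelian group in the integral case. By the Church--Ellenberg--Farb--Nagpal, Church--Miller--Nagpal--Reinhold and Miller--Wilson theorem quoted above, each $E_2^{p,q}$ is generated in degree $\le 4q$ and presented in degree $\le 4q+1$.

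Next we push these bounds through the spectral sequence using $\FI$-homology. We use that, for $\FI$-modules, the local degree and regularity bounds of Church--Miller--Nagpal--Reinhold behave well under kernels, cokernels and extensions. Concretely, $\FI$-homology $\HFI_j$ satisfies $\deg \HFI_j \le$ (generation degree $+$ regularity-type bound), and these quantities are subadditive in short exact sequences. Each differential $d_r$ is a map of $\FI$-modules. The pages $E_{r+1}$ are subquotients, and we control them by the long exact sequence in $\FI$-homology. Since $p+q=i$ with $q\le i$, every term contributing to total degree $i$ has $\HFI_0$ in degree $\le 4q\le 4i$ and $\HFI_1$ in degree $\le 4q+1 \le 4i+1$. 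The abutment $\HH^i_G$ has a finite filtration with these graded pieces, so the bounds transfer via the extension long exact sequence.

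The main obstacle is this passage through the spectral sequence. Subquotients of modules generated in degree $\le d$ need not have the same presentation degree without a regularity input. We would handle it by using the Castelnuovo--Mumford regularity bounds that accompany the cited theorem, namely $\HFI_j(\HH^q(\PConf))$ in degree $\le 4q+j$ (Miller--Wilson). We would then invoke the CMNR result that kernels and cokernels of maps between modules of bounded local degree have controlled $\HFI_0$ and $\HFI_1$. If the bound $4i$ degrades under this argument, we fall back to a direct argument: the differentials come from the Euler class and cup product with $\HH^*(BG)$. A second, lesser technicality is verifying the Cohen--Pakianathan equivalence for small $n$ (where the Borel construction is not aspherical, e.g.\ $n\le 2$), together with its $\FI$-naturality under point-forgetting maps. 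For the generation/presentation degrees this only affects degrees below $4i$ with $i\ge 1$, and we treat $i=0$ separately.
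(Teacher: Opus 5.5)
There is a genuine gap, and it is the step you flagged yourself: pushing the bounds through the Serre spectral sequence.

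Your bounds are for $E_2^{p,q}$. But the graded pieces of the filtration on $\HH^i_G$ are the $E_\infty^{p,q}$, obtained by repeatedly taking kernels of $d_r\colon E_r^{p,q}\to E_r^{p+r,q-r+1}$ and quotients by images. Your sentence about ``every term contributing to total degree $i$'' silently replaces $E_\infty$ by $E_2$.

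Quotients are harmless; kernels are not. From $0\to\ker d_r\to E_r^{p,q}\to\operatorname{im} d_r\to 0$ you only learn that $\HFI_0(\ker d_r)$ is controlled by $\HFI_1(\operatorname{im} d_r)$. Via $0\to\operatorname{im} d_r\to E_r^{p+r,q-r+1}\to\operatorname{coker} d_r\to 0$, that in turn requires $\HFI_2(\operatorname{coker} d_r)$. Presentation degree of the kernel needs one more step, and all of this recurs on every page.

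The general tool for such estimates is Church--Ellenberg: $\HFI_j(N)$ vanishes above $t_0(N)+t_1(N)+j-1$. It makes the degree grow at each kernel, so this route yields some linear range but not $4i$ and $4i+1$. The regularity input ``$\HFI_j(\HH^q(\PConf))$ vanishes above $4q+j$'' is not what the paper takes from Miller--Wilson. In any case you would need such bounds for every intermediate page $E_r$, not just $E_2$. The ``direct argument via the Euler class'' is not supplied.

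The paper sidesteps this by staying at the cochain level until the last moment. Its input is the vanishing line $\FIi(C^{-*}\PConf(M))_n=0$ for $n>-2i$. Corollary~\ref{lemma 2} and Lemma~\ref{lemma 1} carry this line over to $G^p\times\PConf(M)$, and then to the realization of the bar construction $B_\bullet(\ast,G,\PConf(M))$, which models $\PConf(M)\sslash G$. The filtration there is handled by long exact sequences in FI-hyperhomology, where vanishing lines pass through extensions with no loss; the shifted columns even satisfy a better line. Only at the end does Gu\`es's Theorem~\ref{Gues generation and presentation} convert the line into generation $\le 4i$ and presentation $\le 4i+1$. Your Serre filtration could be repaired the same way: filter the cochains of the Borel construction over skeleta of $BG$ and estimate FI-hyperhomology instead of the $E_2$ page.

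Separately, $G=SO(3)$ is the wrong group. The orbit map sends the generator of $\pi_1 SO(3)=\Z/2$ to the central full twist. So $\PConf_n(\Sigma)\sslash SO(3)$ is a $K(\pi,1)$ for $\PBr_n(\Sigma)$ modulo that $\Z/2$, not for $\PBr_n(\Sigma)$ itself. For example, $\PConf_3(S^2)\cong PSL_2(\mathbb C)$ has contractible $SO(3)$-Borel construction, whereas $\PBr_3(S^2)\cong\Z/2$ has $\HH^2=\Z/2$. You need $S^3$ acting through $SO(3)$, as in Cohen--Pakianathan. This also makes $\HH^*(BG;\Z)$ torsion-free, which your integral identification $E_2^{p,q}\cong\HH^p(BG)\otimes\HH^q(\PConf_n(\Sigma))$ requires; for $BSO(3)$, extra $\mathrm{Tor}$ terms appear, and these are again kernels.
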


\begin{corollary}
    
 \label{Br Stab Period}
    Let $p$ be prime, $i \geq 0$, and let $\Sigma$ be $S^2$ or $\RP^2$.  $$\HH_i(\Br_n(\Sigma);\mathbb F_p) \cong \HH_i(\Br_{n+p^j}(\Sigma) ;\mathbb F_p)$$ for $n \ge 8i+1$ and $p^j > 4i$.
\end{corollary}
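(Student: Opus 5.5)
The plan is to deduce the corollary from Theorem~\ref{PBr Rep Stab} via a transfer argument together with a periodicity principle for coinvariants of finitely presented $\FI$-modules. First we use the short exact sequence $1 \to \PBr_n(\Sigma) \to \Br_n(\Sigma) \to S_n \to 1$ and the associated Lyndon--Hochschild--Serre (Cartan--Leray) spectral sequence
$$E^2_{a,b} = \HH_a\bigl(S_n; \HH_b(\PBr_n(\Sigma);\bF_p)\bigr) \Rightarrow \HH_{a+b}(\Br_n(\Sigma);\bF_p).$$
Since $p$ may divide $|S_n|$, the transfer alone is not enough. Instead we treat $\HH_*(\Br_n(\Sigma);\bF_p)$ as the $S_n$-equivariant homology of the chain complexes $C_*(\PBr_n)$, that is, as the homology of $E S_n \times_{S_n} C_*(\PBr_n(\Sigma))$. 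This is the ``more general result'' the abstract advertises: the equivariant homology of an $\FI$-chain complex whose homology has bounded generation and presentation degree is stably periodic.

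The key steps are as follows.
\begin{enumerate}
\item Upgrade Theorem~\ref{PBr Rep Stab} to homology with $\bF_p$ coefficients. We use universal coefficients on the $\FI$-module $\HH^i(\PBr_\bullet(\Sigma))$, or rerun its proof with $\bF_p$ coefficients. This gives that $\HH_b(\PBr_\bullet(\Sigma);\bF_p)$ is an $\FI^\sharp$-type (or co-$\FI$) module with generation degree $\le 4b$ and presentation degree $\le 4b+1$.
\item Use the fact that an $\FI$-module $V$ over $\bF_p$ presented in finite degree has stably periodic group homology $\HH_a(S_n;V_n)$. For a free $\FI$-module $M(W)$ with $W$ in degree $m$, Shapiro's lemma gives $\HH_a(S_n; M(W)_n) \cong \HH_a(S_m\times S_{n-m}; W)$. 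By Künneth this reduces to $\HH_*(S_{n-m};\bF_p)$, which is actually stable in $n$ (Nakaoka). For periodicity we rather need the $p$-adic behavior of binomial coefficients $\binom{n}{m}$ mod $p$. These are periodic in $n$ with period $p^j$ once $p^j > m$, which explains the hypothesis $p^j>4i$ given $m \le 4i$.
\item Pass from free modules to presented ones via the $\FI$-homology / a free resolution truncated in low degrees. Then compare the spectral sequences for $n$ and $n+p^j$ and conclude isomorphism on $E^\infty$, hence on the abutment in total degree $i$, once $n \ge 8i+1$.
\end{enumerate}

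The main obstacle is step 3. We need an $\FI$-resolution or a cellular $\FI$-chain model of $C_*(\PConf)$-type for $\PBr_n(\Sigma)$, since it is not a $K(\pi,1)$. Here we would use Cohen--Pakianathan, who identify $\PBr_n(\Sigma)$ as an extension relating to $\PConf_n$ with the $K(\pi,1)$ obstruction coming from $SO(3)$ or $\mathbb{Z}/2$. We also need the ranges to propagate with doubling of the generation and presentation bounds, giving $n\ge 2(4i)+1 = 8i+1$. I would first try the standard bound that derived $\FI$-homology is controlled by the generation and presentation degrees (Church--Ellenberg regularity), which yields $\FI$-homology of degree $\le 8i$ in the relevant range. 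Then I would argue that the coinvariant complexes $(-)_{S_n}$ of free $\FI$-modules generated in degree $\le 4i$ are $p^j$-periodic for $n$ beyond that range.
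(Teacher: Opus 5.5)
\textbf{There is a genuine gap, in Step 3.} You plan to show that the $E^2$ pages $\HH_a(S_n;\HH_b(\PBr_n(\Sigma);\bF_p))$ agree for $n$ and $n+p^j$, and then to ``compare the spectral sequences'' to get agreement on $E^\infty$. But you have no morphism of spectral sequences relating $n$ and $n+p^j$. The usual stabilization map, adding a point near the boundary, does not exist for closed $\Sigma$, and you propose no substitute. Abstractly isomorphic $E^2$ pages give no control over the differentials, so nothing follows about $E^\infty$ or the abutment.

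Periodicity has to be established at the cochain level, for the homotopy orbits $X_n\sslash S_n$ of a co-$\FI$ space $X$ carrying a free $S_n$-action. That is precisely what the result of Gu\`es recalled as Proposition~\ref{Nic Prop 3.13} provides. Its hypotheses are only generation and presentation bounds on the $\FI$-modules $\HH^i(X)$, but it needs an actual such space $X$ as input. Your free-module and binomial-coefficient sketch is an attempt to reprove that result on the $E^2$ page only. The range $n\ge 8i+1$, $p^j>4i$ is obtained by analogy rather than derived. Separately, in Step 1 these modules are not $\FI^\sharp$: for $\FI^\sharp$-modules your Shapiro--Nakaoka computation would make the $E^2$ page stable, not periodic.

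\textbf{The second missing ingredient is the model.} The expression $ES_n\times_{S_n}C_*(\PBr_n(\Sigma))$ is not defined. $S_n$ acts on $\PBr_n(\Sigma)$ only through outer automorphisms (conjugation in $\Br_n(\Sigma)$). And $\PConf_n(\Sigma)$ is not aspherical, so it cannot serve as the space. You gesture at Cohen--Pakianathan but never say which space to use.

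The paper's observation is that $X_n=\PConf_n(\Sigma)\sslash S^3$ is itself a co-$\FI$ space with a free $S_n$-action commuting with $S^3$. Hence $X_n\sslash S_n\simeq\UConf_n(\Sigma)\sslash S^3$. Theorem~\ref{Equivariant Rep Stab} is a statement about this space, not merely about the groups $\HH^i(\PBr_n(\Sigma))$ as in Theorem~\ref{PBr Rep Stab}. It supplies the hypothesis of Proposition~\ref{Nic Prop 3.13} with $a=4$, $b=0$. This yields $\HH^i(\UConf_n(\Sigma)\sslash S^3;\bF_p)\cong\HH^i(\UConf_{n+p^j}(\Sigma)\sslash S^3;\bF_p)$ for $p^j>4i$ and $n\ge 8i+1$. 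Theorem~\ref{Jon and Fred} identifies both sides with the cohomology of the braid groups for $n\ge 3$, and dualizing over $\bF_p$ gives the homology statement.

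If you replace your Steps 2--3 with this model and that citation, the proof closes; as written, it does not.
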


\begin{corollary} \label{PBr Polynomial}
    Let $\bF$ be a field and let $\Sigma$ be $S^2$ or $\RP^2$. There is a polynomial $f$ of degree at most $2i$ depending on $\Sigma$, $i$, and $\bF$ such that $$\dim \HH_i(\PBr_n(\Sigma); \bF) = f(n)$$ for $i\geq 0$ and $n \geq 8i+1$.
\end{corollary}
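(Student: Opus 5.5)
Corollary~\ref{PBr Polynomial} will follow from Theorem~\ref{PBr Rep Stab} together with the standard fact that finitely presented $\FI$-modules over a field have eventually polynomial dimension. The plan has four steps.

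\textbf{Step 1: pass from $\HH_i$ to $\HH^i$ over $\bF$.} By the universal coefficient theorem over a field, $\HH^i(\PBr_n(\Sigma);\bF)\cong \operatorname{Hom}_\bF(\HH_i(\PBr_n(\Sigma);\bF),\bF)$. Whenever these groups are finite dimensional, the two sides have the same dimension. Finite dimensionality follows from the Cohen--Pakianathan type description used for Theorem~\ref{PBr Rep Stab}: the relevant spaces are homotopy equivalent to finite-type spaces (e.g.\ $\PConf_n(\Sigma)$ times a classifying space of a finite group, or a Borel construction with finite-type homology). So it suffices to show that $n\mapsto \dim \HH^i(\PBr_n(\Sigma);\bF)$ is a polynomial of degree $\le 2i$ for $n\ge 8i+1$.

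\textbf{Step 2: apply Theorem~\ref{PBr Rep Stab} with field coefficients.} The theorem says the $\FI$-module $V=\HH^i(\PBr_\bullet(\Sigma);\bF)$ is generated in degree $\le 4i$ and presented in degree $\le 4i+1$. If the theorem is stated only over $\Z$, I will redo its argument with $\bF$ coefficients, or pass to $\bF$ via universal coefficients. This should work because the $\FI$-homology bounds are proven at the chain level and are stable under $-\otimes\bF$ together with $\operatorname{Tor}$ terms from degree $i+1$.

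\textbf{Step 3: invoke the polynomiality theorem for $\FI$-modules.} I will use the result of Church--Ellenberg--Farb (and Church--Ellenberg--Farb--Nagpal in positive characteristic): if $V$ is generated in degree $\le g$ and presented in degree $\le r$, then $\dim V_n$ is a polynomial of degree $\le g$ for $n\ge g+r$. The sharper form, via the local cohomology/$\FI$-homology regularity bounds of Church--Miller--Nagpal--Reinhold, gives agreement for $n \ge \deg H_0 + \deg H_1$. With $g=4i$ and $r=4i+1$ this gives polynomial behaviour for $n\ge 8i+1$, matching the stated range.

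\textbf{Step 4: improve the degree bound from $4i$ to $2i$.} This is the main obstacle. I would bound the weights of the irreducible constituents: the degree of the polynomial equals the maximal number of boxes outside the first row of the $S_n$-representations appearing, and I would show this is at most $2i$. I would first try to read this off from a model such as the Totaro/Kriz spectral sequence or the Cohen--Pakianathan decomposition. In such a model, each cohomological degree contributes classes involving at most two points per unit of degree; for example, generators $G_{ab}$ of degree $1$ involve two indices. Degree-$i$ classes are then subquotients of $\FI$-modules induced from $S_{2i}$, which have polynomial dimension of degree $\le 2i$. In positive characteristic I would instead work directly with the "induced from degree $\le 2i$" filtration, since that bounds the degree of the dimension polynomial independently of characteristic.
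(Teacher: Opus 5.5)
Your Steps 1--3 are exactly the paper's proof. The paper's proof is one sentence: the corollary follows from Theorem~\ref{PBr Rep Stab} together with Propositions 3.1 and 2.14 of Church--Miller--Nagpal--Reinhold. Generation degree $\le 4i$ and presentation degree $\le 4i+1$ make the dimension polynomial for $n\ge 4i+(4i+1)=8i+1$. Of your two options in Step 2, rerunning the cochain-level argument over $\bF$ is the one that works; universal coefficients would lose control of the $\operatorname{Tor}$ term and hence of the range.

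That input only bounds $\deg f$ by the generation degree $4i$, and the paper says nothing beyond the citation. You are right that $2i$ cannot come from Theorem~\ref{PBr Rep Stab} alone: the free $\FI$-module on one generator in degree $4i$, of dimension $n!/(n-4i)!$, satisfies the same bounds.

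So Step 4 carries all the extra content, and as written it is a plan with a real hole. The Totaro/Kriz model computes $\HH^*(\PConf_n(\Sigma))$, which is \emph{not} $\HH^*(\PBr_n(\Sigma))$; that discrepancy is the whole point of the paper. In addition, the Young-diagram/weight language is a characteristic-zero notion, and generation degree is not inherited by subquotients: the submodule of the constant module $\bF$ that vanishes below degree $k$ is generated in degree $k$. Run the argument purely on dimensions instead.

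For $n\ge 3$, $\HH^*(\PBr_n(\Sigma);\bF)\cong \HH^*(\PConf_n(\Sigma)\sslash S^3;\bF)$. The Serre spectral sequence over the simply connected base $BS^3$, with $\HH^*(BS^3;\bF)=\bF[u]$ and $|u|=4$, gives
\[
\dim \HH^i(\PBr_n(\Sigma);\bF)\le \sum_{k\ge 0}\dim \HH^{i-4k}(\PConf_n(\Sigma);\bF).
\]
Now consider the Leray spectral sequence of $\PConf_n(\Sigma)\hookrightarrow \Sigma^n$. Its term $E_2^{p,r}$ is a sum, over the Arnold monomials $G_{a_1b_1}\cdots G_{a_rb_r}$, of $\HH^p$ of the corresponding diagonal $\cong \Sigma^{n-r}$. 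There are $c(n,n-r)$ such monomials, an unsigned Stirling number and a polynomial of degree $2r$ in $n$. When $\Sigma=\RP^2$ and $\operatorname{char}\bF\ne 2$ the diagonals carry orientation-twisted coefficients, but the same count goes through.

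Hence $\dim E_2^{p,r}$ is bounded by a polynomial in $n$ of degree $\le p+2r\le 2(p+r)$. So $\dim \HH_i(\PBr_n(\Sigma);\bF)$ is bounded above by a polynomial of degree $2i$. Since it equals $f(n)$ for $n\ge 8i+1$, we get $\deg f\le 2i$.

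With this inserted your argument is complete, and it justifies the degree bound that the paper's one-line proof leaves unexplained.
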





\subsection{Stability for Equivariant Homology}

While it is \textbf{not} true that $$\HH_i(\PConf_n(\Sigma)) \cong \HH_i(\PBr_n(\Sigma)),$$ Cohen--Pakianathan established a model for comparing the group homology of $\PBr_n(\Sigma)$ with the equivariant homology of $\PConf_n(\Sigma)$. Considering $S^3$ as the double cover of $SO(3)$, it naturally acts on $S^2$ by rotation. Let $\sslash$ denote homotopy quotient, also known as the Borel construction; Cohen--Pakianathan \cite[Cor 6.16 and 7.15]{cohen2026configurationspacesbraidgroups} proved the following result.

\begin{theorem}[Cohen--Pakianathan] \label{Jon and Fred}  Let $\Sigma$ be $S^2$ or $\RP^2$, then 
    $$\HH^i(\PConf_n(\Sigma) \sslash S^3) \cong \HH^i(\PBr_n(\Sigma)) \text{ for } n \ge 3$$ and
    $$\HH^i(\UConf_n(\Sigma) \sslash S^3) \cong \HH^i(\Br_n(\Sigma)) \text{ for } n \ge 3.$$

\end{theorem}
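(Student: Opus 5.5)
The plan is to identify the Borel construction $\PConf_n(\Sigma)\sslash S^3$ with an Eilenberg--MacLane space $K(\PBr_n(\Sigma),1)$, at least up to a map inducing an isomorphism on cohomology; the unordered statement will then follow by passing to $S_n$-quotients. We use the fibration
$$\PConf_n(\Sigma) \to \PConf_n(\Sigma)\sslash S^3 \to BS^3,$$
with $BS^3=\mathbb{H}P^\infty$ $3$-connected. Its long exact sequence gives $\pi_1(\PConf_n(\Sigma)\sslash S^3)\cong \pi_1(\PConf_n(\Sigma))=\PBr_n(\Sigma)$, and also $\pi_k(\PConf_n(\Sigma)\sslash S^3)\cong\pi_k(\PConf_n(\Sigma))$ for $k\ge 5$, together with an exact segment in degrees $2,3,4$.

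\textbf{Step 1 (higher homotopy of configuration spaces).} For $n\ge 3$ we show that the universal cover of $\PConf_n(\Sigma)$ is homotopy equivalent to $S^3$ via the orbit map. For $\Sigma=S^2$, the action of $SO(3)$ (equivalently of $S^3$) on three distinct points is free and transitive on the relevant component. Hence $\PConf_3(S^2)\simeq SO(3)$. The Fadell--Neuwirth fibrations $\PConf_{n}(S^2)\to\PConf_3(S^2)$ have fibres $\PConf_{n-3}(S^2\setminus 3\text{ pts})$, which are aspherical. Therefore $\PConf_n(S^2)$ has universal cover $\simeq S^3$, and $\pi_k\cong\pi_k(S^3)$ for $k\ge 2$. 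For $\RP^2$ we argue similarly: $SO(3)$ acts on $\RP^2$, and the Fadell--Neuwirth fibrations reduce to a small base case ($n=2$ or $3$), where one checks that the universal cover is $S^3$ through the $S^3$-orbit; beyond that the fibres are aspherical punctured-$\RP^2$ configuration spaces.

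\textbf{Step 2 (the Borel construction is aspherical).} Fix a point $x$. The orbit map $S^3\to\PConf_n(\Sigma)$, $g\mapsto gx$, is the fibre inclusion of $\PConf_n(\Sigma)\to\PConf_n(\Sigma)\sslash S^3$ shifted by one degree. By Step 1 it induces an isomorphism on $\pi_3$, and hence on all $\pi_k$ with $k\ge 2$, since both sides are then $\pi_k(S^3)$ through the universal cover. We compare the long exact sequences of $S^3\to ES^3\to BS^3$ and of the orbit/Borel fibration
$$S^3\to \PConf_n(\Sigma)\to \PConf_n(\Sigma)\sslash S^3$$
(valid since $S^3$ acts with finite stabilizers; we can pass to the $SO(3)$-action, which is free for $n\ge3$ on $S^2$). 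In degrees $k\ge2$ the connecting maps become isomorphisms, which forces $\pi_k(\PConf_n(\Sigma)\sslash S^3)=0$ for $k\ge 2$. So $\PConf_n(\Sigma)\sslash S^3\simeq K(\PBr_n(\Sigma),1)$, and the cohomology isomorphism follows.

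\textbf{Step 3 (the unordered case).} $S_n$ acts freely on $\PConf_n(\Sigma)$ and commutes with $S^3$. Therefore $\UConf_n(\Sigma)\sslash S^3$ is covered with deck group $S_n$ by the aspherical space $\PConf_n(\Sigma)\sslash S^3$. It is thus itself aspherical, with $\pi_1$ the extension $\Br_n(\Sigma)$.

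\textbf{Main obstacle.} The hardest part is Step 1 for $\RP^2$, together with the finite-stabilizer subtleties: the $S^3$-action on $\RP^2$ is not free on small configurations. If the free-action argument fails there, I would first try working with the homotopy quotient directly via the fibration sequence and comparing $\pi_3$ through the explicit orbit map. Alternatively I would rationalize away the finite stabilizers, at the cost of only proving a cohomology isomorphism rather than a homotopy equivalence, which is all the statement needs.
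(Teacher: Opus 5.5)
The paper does not prove this statement; it quotes it from Cohen--Pakianathan, so there is no internal argument to compare against. Your strategy is a sound way to prove it. You use the Borel fibration over the $3$-connected space $BS^3$, show the orbit map $S^3\to\PConf_n(\Sigma)$ is a $\pi_k$-isomorphism for $k\ge 2$, conclude $\PConf_n(\Sigma)\sslash S^3\simeq K(\PBr_n(\Sigma),1)$, and then pass to the free $S_n$-quotient. However, several statements along the way are wrong and need fixing.

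First, the claim $\pi_k(\PConf_n(\Sigma)\sslash S^3)\cong\pi_k(\PConf_n(\Sigma))$ for $k\ge 5$ is false. Since $\pi_k(BS^3)\cong\pi_{k-1}(S^3)$ is nonzero in infinitely many degrees, connectivity of the base only gives isomorphisms for $k\le 2$. Your own conclusion contradicts the claim: $\pi_5(\PConf_n(S^2))\cong\pi_5(S^3)\cong\Z/2$, while you show $\pi_5$ of the Borel construction is $0$. Only the $\pi_1$ statement is actually used.

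Second, $SO(3)$ acts freely but not transitively on $\PConf_3(S^2)$, since the dimensions are $3$ and $6$. The correct input is that $PSL_2(\mathbb{C})$ acts simply transitively, so $\PConf_3(S^2)\cong PSL_2(\mathbb{C})$. The orbit map is then the double cover $S^3\to SO(3)=PSU(2)$ followed by the homotopy equivalence $PSU(2)\hookrightarrow PSL_2(\mathbb{C})$. Combined with $S^3$-equivariance of the Fadell--Neuwirth projection, this gives the $\pi_3$-statement your Step 2 needs.

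Third, $S^3\to X\to X\sslash S^3$ is a fibre sequence for every $S^3$-space $X$, because $ES^3\times X\to ES^3\times_{S^3}X$ is a principal bundle. So no freeness or finite-stabilizer hypothesis is involved. This matters for $\RP^2$, where the $SO(3)$-action on $\PConf_n(\RP^2)$ is never free: a half-turn about the normal of a plane fixes every configuration on the corresponding projective line. Your fallback of rationalizing would also not suffice. It yields only a rational isomorphism, while the statement is integral and is later used with $\bF_p$ coefficients.

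The step you leave undone is the $\RP^2$ base case (``one checks''), and it is easy to supply. $\PConf_2(\RP^2)$ deformation retracts onto the space of ordered orthogonal pairs of lines: move the second line inside the plane spanned by both until it is orthogonal to the first. That space is a single $SO(3)$-orbit $SO(3)/V_4\cong S^3/Q_8$, with $V_4$ the diagonal subgroup. The orbit map $S^3\to S^3/Q_8$ is a covering, hence a $\pi_k$-isomorphism for $k\ge 2$. The equivariant Fadell--Neuwirth projection $\PConf_n(\RP^2)\to\PConf_2(\RP^2)$ has aspherical fibre $\PConf_{n-2}(\RP^2\setminus\{2\text{ pts}\})$. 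So the orbit map into $\PConf_n(\RP^2)$ is a $\pi_{\ge 2}$-isomorphism for all $n\ge 2$, and your Step 2 applies verbatim.

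Finally, in Step 3 you should not simply call $\pi_1(\UConf_n(\Sigma)\sslash S^3)$ ``the extension''. An extension of $S_n$ by $\PBr_n(\Sigma)$ is not determined by those two groups alone. Instead, obtain $\pi_1(\UConf_n(\Sigma)\sslash S^3)\cong\Br_n(\Sigma)$ from the Borel fibration of $\UConf_n(\Sigma)$ over the $3$-connected $BS^3$. Your asphericity argument via the free $S_n$-covering is fine as written.
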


Thus the group homology of the braid groups on $\Sigma$ are not the same as the homology of their associated configuration spaces, but they are the same as the equivariant homology of their associated configuration spaces. We will prove the follow stability result for the equivariant homology of configuration spaces.

\begin{atheorem} \label{Equivariant Rep Stab}
    Let $G$ be a group acting on a manifold $M$. For $i \geq 0$, $$\HH^i(\PConf_n(M) \sslash G)$$ is generated in degree $\le 4i$ and presented in degree $\le 4i+1$.
\end{atheorem}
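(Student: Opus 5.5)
We plan to run the Serre spectral sequence of the Borel fibration
$$\PConf_n(M) \to \PConf_n(M)\sslash G = EG\times_G \PConf_n(M) \to BG$$
and transfer the known $\FI$-module bounds on $\HH^q(\PConf_n(M))$ to the abutment. The first step is to check that everything is an $\FI$-module. Injections $[m]\hookrightarrow[n]$ induce forgetful maps $\PConf_n(M)\to\PConf_m(M)$. These maps are $G$-equivariant, since $G$ acts diagonally, so they induce maps of homotopy quotients over $BG$. Hence $n\mapsto \PConf_n(M)\sslash G$ is a co-$\FI$-space over the fixed base $BG$. This makes the whole Serre spectral sequence a spectral sequence of $\FI$-modules, with the $E_2$-page
$$E_2^{p,q}(n)=\HH^p\bigl(BG;\mathcal{H}^q(\PConf_n(M))\bigr),$$
which is possibly twisted by the action of $\pi_1(BG)=\pi_0(G)$. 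If $G$ is not connected, we keep local coefficients throughout; the $\FI$-structure still acts coefficientwise.

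The second step is to bound the $E_2$-page. By the representation stability theorem above, $V^q:=\HH^q(\PConf_\bullet(M))$ is generated in degree $\le 4q$ and presented in degree $\le 4q+1$. We will phrase this using $\FI$-homology, in the form $\HFI_0(V^q)$ supported in degrees $\le 4q$ and $\HFI_1(V^q)$ supported in degrees $\le 4q+1$. We then show that applying $\HH^p(BG;-)$ to $V^q$ gives an $\FI$-module whose generation and presentation degrees are no worse. For this we use a free $\mathbb Z[\pi_0 G]$-resolution, or a CW model of $BG$ with cellular cochains, to write $\HH^p(BG;V^q)$ as a subquotient of the $\FI$-modules $\prod_{\text{cells}} V^q$. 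Products over cells must be handled carefully. Over a field, or with finite-type $BG$, they are finite sums. In general we would instead argue with the $\FI$-hyperhomology framework of Miller--Wilson, which controls the degrees of the $\FI$-homology of a cochain complex of $\FI$-modules in terms of those of its terms.

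The key step, and the one we expect to be the main obstacle, is passing from $E_2$ to $E_\infty$ and then through the filtration to $\HH^i(\PConf_n(M)\sslash G)$. Kernels and cokernels of differentials, and extensions, can raise presentation degree in general. To get the sharp bound $\le 4i$ and $\le 4i+1$ rather than something weaker, we will work at the chain level. The cochains $C^*(\PConf_\bullet(M)\sslash G)$ are filtered, with associated graded modelled by $C^*(BG)\otimes C^*(\PConf_\bullet(M))$, or via a twisted tensor product. We will apply $\FI$-hyperhomology $\HFI_j$ to this complex. The input is that $\HFI_j$ of $C^*(\PConf_\bullet(M))$ in cohomological degree $i$ is supported in degree $\le 4i+j$; this is the form in which Miller--Wilson prove the bound. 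Tensoring with $C^*(BG)$, which is constant in $n$ and concentrated in nonnegative degrees, can only shift $\FI$-degrees in favourable directions: a class in total degree $i=p+q$ comes from $V^q$ with $q\le i$, so its degree bound is $4q+j\le 4i+j$. A spectral sequence argument for hyperhomology then gives $\HFI_j$ of the total complex supported in degree $\le 4i+j$. Finally we will use the standard fact that hyperhomology bounds for a complex imply generation and presentation bounds for its cohomology in the corresponding degrees, which yields generation $\le 4i$ and presentation $\le 4i+1$.
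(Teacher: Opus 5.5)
\textbf{Where this breaks.} Your skeleton matches the paper's. You filter the cochains of the Borel construction so that the graded pieces are nonnegatively shifted cochains of $\PConf_n(M)$, times something independent of $n$. You note that these shifts only improve a vanishing line for $\FI$-hyperhomology. Then you convert that line into generation and presentation bounds. The paper filters via the bar construction $\|B_\bullet(\ast,G,\PConf_n(M))\|$ (\autoref{lemma 1}, \autoref{lemma 2}, \autoref{equivariant chain complex}) rather than skeleta of $BG$; this avoids CW models, local trivializations and twisted coefficients. The gap is in the two quantitative ingredients you feed in. With them the argument does not give $4i$ and $4i+1$.

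First, $\FI$-hyperhomology of a complex is singly graded. What Miller--Wilson prove is the slope-$2$ line $\HFI_k\bigl(C^{-*}\PConf(M)\bigr)_n=0$ for $n>-2k$. Your ``$\HFI_j$ in cohomological degree $i$ supported in degree $\le 4i+j$'' can be read in two ways, and neither works:
\begin{itemize}
\item as a statement about $\HFI_j(\HH^i)$, in which case pushing it through the filtration reintroduces exactly the kernel/cokernel/extension problem you meant to avoid;
\item read in total degree $k=j-i$, as a slope-$4$ line $n>-4k$.
\end{itemize}

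Second, the ``standard fact'' that hyperhomology bounds give generation and presentation bounds for cohomology in the corresponding degrees is false. In the spectral sequence $\HFI_p(\HH^q(X))\Rightarrow\HFI_{p-q}(X)$, the group $\HFI_0(\HH^q)$ is hit by differentials from $\HFI_r(\HH^{q+r-1})$. For example, take the augmentation $\Z[S]\to\Z$ as a complex in cohomological degrees $0$ and $1$. Its hyperhomology is supported in $|S|\le 1$, yet its $\HH^0$ (the augmentation kernel) is generated in degree $2$. The correct conversion is Gu\`es's \autoref{Gues generation and presentation}, which doubles the slope. A slope-$4$ input would therefore only give $8i$ and $8i+1$.

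\textbf{How to repair it.} Carry the slope-$2$ line through your filtration.
\begin{itemize}
\item The $p$-th graded piece is a product of copies of $C^{-*}$ of the fibre, shifted by $p$.
\item So its hyperhomology in total degree $k$ is $\HFI_{k+p}$ of the fibre cochains.
\item This vanishes for $n>-2(k+p)$, hence for $n>-2k$, because $p\ge 0$.
\end{itemize}
Products are harmless, since $C^{\FI}_*$ is built from finite sums. The infinite filtration is handled by the truncation argument in \autoref{lemma 1}. This gives $\HFI_k\bigl(C^{-*}(\PConf(M)\sslash G)\bigr)_n=0$ for $n>-2k$. Gu\`es's theorem with $a=2$, $b=0$ then yields exactly $4i$ and $4i+1$.

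Your second step (bounding the $E_2$-page) can then be dropped. It is also false in general over $\Z$: $\HH^p(BG;V)$ contains summands $\mathrm{Hom}(\Z/m,V_n)$, i.e.\ kernels of multiplication by $m$, and their generation degree can exceed that of $V$.
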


\autoref{Jon and Fred} combined with \autoref{Equivariant Rep Stab} gives us \autoref{PBr Rep Stab}; \autoref{PBr Rep Stab} gives us \autoref{Br Stab Period} and \autoref{PBr Polynomial}. The proofs rely heavily on the theory of $\FI$-Hyperhomology which will be reviewed in Section 2.



\subsection{Acknowledgments} I want to thank my advisor, Jeremy Miller. I would not have been able to complete this paper without his support and guidance. I also want to thank Jennifer Wilson and Nicolas Guès for helpful conversations. Thank you to Jonathan Pakianathan for taking the time to post his notes with Fred Cohen on the Arxiv. I also received financial support from NSF grants DMS-2504473 and DMS-2202943 as well as a Simons Foundation Travel Support for Mathematicians grant.


\section{Review of FI-modules}
This section will give an overview of $\FI$-modules, $\FI$-chain complexes and $\FI$-hyper homology as well as review relevant theorems and lemmas. Throughout this section, $S$ will be used to refer to a finite set and $p \ge 0.$

\begin{definition} \text{ }
    \begin{itemize} 
        \item $\FI$ is the category whose objects are finite sets and whose morphisms are injections.
        \item An $\FI$-module is a covariant functor, $N: \FI \longrightarrow \text{Ab}$, the category of abelian groups. $N_n = N([n])$ where $[n]=\{1,...,n\}$. The category of $\FI$-modules is called Mod$_{\FI}$.
        \item A co-$\FI$-module is a contravariant functor from $\FI$ to abelian groups.
    \end{itemize}
\end{definition}

The above definitions still make sense when the category of abelian groups is replaced with any other category. For example, the term $\FI$-chain complex refers to a covariant functor from $\FI$ to chain complexes.

\begin{definition}
    Let $$\sgn\nolimits_S = \bigwedge\nolimits^{|S|} \bZ[S]$$ viewed as a representation of the group of bijections of $S$.
\end{definition}

\begin{definition} \label{chain FI definition}
    We define the chains on an $\FI$-module, $N$, as follows: $$C_p^{\FI}(N)_S = \underset{\underset{|T|=p}{T \subseteq S}}{\bigoplus} N_{S-T} \otimes \sgn\nolimits_T.$$ If $X=Y \cup \{t\}$, let $$\stab_t: N_Y \longrightarrow N_X$$ be the map induced by $Y \hookrightarrow X.$ Let the map $$\delta_p: C_p^{\FI}(N) \longrightarrow C_{p-1}^{\FI}(N)$$ be given by $$x \otimes t_1 \wedge \cdots \wedge t_p \longmapsto \sum_{i=1}^{p} (-1)^{i}t_i(x) \otimes t_1 \wedge \cdots \hat{t_i} \cdots \wedge t_n$$ where $x \in X$ and $T=\{t_1, \dots, t_p\}$. 
\end{definition}

Church--Ellenberg  \cite[Definition 5.9]{MR3654111} proved that the above map, $\delta_p$, is the differential for a chain complex.

\begin{definition}
    Let $N$ be an $\FI$-module and $P_*$ be an $\FI$-chain complex. We define $\FI$ homology as follows: $$\FIi(N)_S = H_i(C_{*}^{\FI}(N)_{S})$$ where the differentials are $\delta_p$ defined in \autoref{chain FI definition}. \\
    Similarly, we define $\FI$-hyper homology as follows:
    $$\FIi(P_*)_S = \HH_i(C_*)$$ where $C_*$ is the total complex of the double complex $C_*^{\FI}(P_*)_S$ and where the differentials are $\delta_p$ defined in \autoref{chain FI definition} along with the chain maps on $P_*$.
    \end{definition}

\begin{definition}[Church--Ellenberg, \cite{MR3654111}]
Let $N$ be an $\FI$-module.
    \begin{itemize}
        \item $N$ is said to be generated in degree $\le d$, if $\HH^{\FI}_0(N)_S = 0$ for $n > d$.
        \item $N$ is said to be presented in degree $\le d$, if $\HH^{\FI}_0(N)_S = \HH^{\FI}_1(N)_S = 0$ for $n > d.$
    \end{itemize}
\end{definition}

The following proposition is well known and follows from the definition of $\FI$-hyper homology.
\begin{proposition}
If we have a short exact sequence of $\FI$-chain complexes: $$0 \longrightarrow A_*^{\FI} \longrightarrow B_*^{\FI} \longrightarrow C_*^{\FI} \longrightarrow 0,$$ there is an induced long exact sequence on $\FI$-hyper homology: \begin{align*}
    \hspace{3cm}& & & \hspace{1.6cm} \dots & \longrightarrow \HFI_{i+1}(C_*^{\FI})_n & \hspace{5cm}\\ \\
    \hspace{3cm}&\longrightarrow \FIi(A_*^{\FI})_n && \longrightarrow \FIi(B_*^{\FI})_n & \longrightarrow \FIi(C_*^{\FI})_n & \hspace{5cm} \\ \\
    \hspace{3cm}& \longrightarrow \HFI_{i-1}(A_*^{\FI})_n && \longrightarrow \hspace{.1cm} \dots \text{ } & 
\end{align*}
\end{proposition}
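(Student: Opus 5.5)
The plan is to prove the long exact sequence from the ordinary long exact sequence in homology of a short exact sequence of chain complexes, after checking that the total complexes involved fit into one. Fix a finite set $S$ (or $S=[n]$).

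\emph{Step 1: levelwise exactness of the FI-chains.} The construction $N \mapsto C_p^{\FI}(N)_S$ is exact in $N$. Indeed, $C_p^{\FI}(N)_S$ is a finite direct sum, over subsets $T\subseteq S$ with $|T|=p$, of $N_{S-T}\otimes \sgn\nolimits_T$. Each $\sgn\nolimits_T$ is free of rank one over $\bZ$, so tensoring with it is exact, and finite direct sums are exact. The construction is also natural in $N$: the differentials $\delta_p$ are built from the maps $\stab_t$, and morphisms of $\FI$-modules commute with these. So, applied to each chain degree $q$ of the given short exact sequence of $\FI$-chain complexes, we get short exact sequences
\[0 \to C_p^{\FI}(A_q)_S \to C_p^{\FI}(B_q)_S \to C_p^{\FI}(C_q)_S \to 0\]
for all $p,q$. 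These maps commute with both $\delta_p$ and the internal differentials of $A_*,B_*,C_*$, because the maps $A\to B\to C$ are chain maps of $\FI$-chain complexes.

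\emph{Step 2: passing to total complexes.} In each total degree $i$, the total complex is $\bigoplus_{p+q=i} C_p^{\FI}(P_q)_S$. Because the double complex is concentrated in $p\ge 0$ and $p\le |S|$, this is a finite sum, so there is no issue about sums versus products. A direct sum of short exact sequences is short exact, and the sign conventions for the total differential are the same in all three complexes. Hence we obtain a short exact sequence of chain complexes
\[0\to \operatorname{Tot}C^{\FI}_*(A_*)_S \to \operatorname{Tot}C^{\FI}_*(B_*)_S\to \operatorname{Tot}C^{\FI}_*(C_*)_S\to 0.\]

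\emph{Step 3: the long exact sequence.} Apply the snake lemma, i.e.\ the standard long exact sequence in homology, to this short exact sequence. This produces the displayed sequence, including the connecting map $\HFI_{i+1}(C_*^{\FI})_n\to \HFI_i(A_*^{\FI})_n$. Naturality in $S$ of everything used shows that it is even a long exact sequence of $\FI$-modules.

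The only point needing care is Step 1, namely that exactness survives the functor $C^{\FI}_p$. This is immediate here because there is no derived tensor or $\operatorname{Hom}$ involved, only tensoring with free rank-one modules.
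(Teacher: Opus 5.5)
Your argument is correct and is the standard one the paper has in mind when it calls the proposition well known and says it ``follows from the definition of $\FI$-hyper homology''; the paper gives no further proof. You show that $N \mapsto C^{\FI}_p(N)_S$ is exact and natural, deduce a short exact sequence of total complexes, and then take the ordinary long exact sequence in homology, which is exactly the intended unpacking.
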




The next theorem shows that theory of $\FI$-hyper homology can be used to prove representation stability for the homology of $\FI$-chain complexes.

\begin{theorem} \label{Gues generation and presentation} \cite[Theorem 2.2 (1)]{MR5064143}
Let $X_{*}$ be an $\FI$-chain complex. Suppose that $\FIi (X_{*})_S = 0$ for all $n > -ai+b$ for all i and with $a > 0$. Then every $H_i (X_{*})$ is  
generated in degrees  $\le \mathrm{max}(0, 2ai+2b)$ and presented in degrees $\le \mathrm{max}(0, 2ai+2b+1)$. 

\end{theorem}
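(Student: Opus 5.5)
The plan is to compare $\FI$-hyperhomology of $X_*$ with $\FI$-homology of the homology $\FI$-modules $H_q(X_*)$, using the hyperhomology spectral sequence for the double complex $C^{\FI}_*(X_*)_S$. Filtering the total complex by the internal degree of $X_*$ gives a spectral sequence
$$E^2_{p,q} = \HH^{\FI}_p\bigl(H_q(X_*)\bigr)_S \Longrightarrow \HH^{\FI}_{p+q}(X_*)_S,$$
with $d^r \colon E^r_{p,q} \to E^r_{p-r,q+r-1}$. For an $\FI$-module $N$ write $t_p(N)$ for the largest $|S|$ with $\HH^{\FI}_p(N)_S \neq 0$. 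By hypothesis, the abutment in total degree $i$ vanishes for $|S| > -ai+b$. I want to show
$$t_0(H_i) \le 2ai+2b \quad\text{and}\quad t_1(H_i) \le 2ai+2b+1,$$
with the same sign conventions as in the statement. In the intended application $X_* = C^{-*}$ is concentrated in nonpositive degrees, so the bound grows as the degree moves away from $0$.

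The second ingredient is the Church--Ellenberg regularity theorem \cite{MR3654111}: for any $\FI$-module $N$ and $p \ge 1$,
$$t_p(N) \le t_0(N) + t_1(N) + p - 1.$$
This controls every row of the $E^2$-page once the bottom two entries are controlled.

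The argument is an induction on $q$, starting at the end where $X_*$ is bounded, so that the base cases are vacuous.

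1. Look at the column $p=0$. Nothing in $E^r_{0,q}$ supports a nonzero differential, since the targets lie in $p<0$. Hence $E^\infty_{0,q}$ is the quotient of $E^2_{0,q}$ by the images of the $d^r$ coming from $E^r_{r,q-r+1}$ for $r \ge 2$.
2. Since $E^\infty_{0,q}$ is a subquotient of $\HH^{\FI}_q(X_*)$, any $S$ with $|S|$ larger than both the hyperhomology bound and the degrees of all these sources has $E^2_{0,q}=0$. This bounds $t_0(H_q)$.
3. The sources $E^2_{r,q-r+1}$ involve $H_{q'}$ with $q' \ne q$ already treated by induction. Their degrees are bounded by regularity as $t_0(H_{q'})+t_1(H_{q'})+r-1$.
4. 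For $p=1$ the same argument applies with one additional outgoing differential. That differential lands in $E_{-1,*}$, which is zero, so $t_1(H_q)$ is bounded in the same way in terms of the hyperhomology in degree $q+1$ and the entries $E^2_{r+1,q-r+1}$.

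The main obstacle is the bookkeeping in steps 3 and 4. Regularity roughly doubles the bound each time it is applied, since it adds $t_0$ and $t_1$. Meanwhile the shift $q' = q-r+1$ together with the linear hypothesis $-ai+b$ with $a>0$ has to absorb the extra $+r-1$. The claim I would aim to prove inductively is the exact linear bound $2ai+2b$ (respectively $+1$).

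I would first check the case $r=2$ by hand to see whether the slope $2a$ closes the induction. If the naive estimate overshoots, I would instead use the sharper form of the Church--Ellenberg argument. That form bounds $t_p(N)$ by the degree of the derived module $\max(t_0,t_1)$ plus $t_0 + p$, and it is what produces the factor $2$ rather than an accumulating constant. A remaining point to verify is that the degree shifts of the differentials go in the direction compatible with $a>0$; I expect this to use that the sequence is concentrated in a half-plane, so that only finitely many differentials contribute in each total degree.
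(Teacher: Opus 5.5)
The paper does not prove this statement; it quotes it from the cited reference. Your argument therefore has to stand on its own, and as written it has a genuine gap.

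\textbf{The induction runs in the wrong direction.} Your $E^2$-page $\HFI_p(H_q(X_*))$ comes from filtering by the $\FI$-degree $p$, not by the internal degree. Convergence is automatic because $C^{\FI}_p(-)_S=0$ for $p>|S|$. With this filtration, the differentials hitting $E^r_{0,q}$ and $E^r_{1,q}$ come from $E^r_{r,q-r+1}$ and $E^r_{r+1,q-r+1}$, that is, from rows $q'=q-r+1<q$. For $X_*=C^{-*}(Y)$ these are the rows farther from $0$, which is the unbounded side. Starting at the bounded end $q=0$ is therefore starting at the wrong end. In the motivating case $Y=\PConf_n(S^2)\sslash S^3$, the homology does not vanish below any row: it computes the cohomology of $\PBr_n(S^2)$, which has torsion. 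So in your step 3 the rows you need have not yet been treated, and there is no vacuous base case. The statement itself assumes no boundedness. Restricting to a fixed $S$, where only $r\le|S|$ matters, does not rescue this, because of the next point.

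\textbf{The inductive step does not close even if those rows were known.} Feed the target bounds $t_0(H_{q'})\le -2aq'+2b$ and $t_1(H_{q'})\le -2aq'+2b+1$ into the regularity theorem. You get only $t_r(H_{q-r+1})\le -4a(q-r+1)+4b+r$, which exceeds the target $-2aq+2b$ throughout the range where the target is nonnegative. Moving to row $q-r+1$ \emph{adds} $a(r-1)$ to the hypothesis bound rather than absorbing the $r-1$.

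In the bounded-below homological setting, where such an induction is well-founded, the same estimate gives $t_0(H_q)+t_1(H_q)\lesssim 2\bigl(t_0(H_{q-1})+t_1(H_{q-1})\bigr)+O(1)$. That yields exponential ranges, and this compounding is the standard reason such arguments do not give linear bounds. Your proposed ``sharper form'' $\max(t_0,t_1)+t_0+p$ is in fact weaker than $t_0+t_1+p-1$. Any estimate that bounds $\HFI_r$ of each $E^2$-term by a sum of $t_0$ and $t_1$ compounds in the same way.

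To get slope $2a$, a regularity-type estimate must be applied only once, not row by row. Two possible routes:
\begin{itemize}
\item Induct on the $\FI$-degree $n$, which is well-founded even for unbounded complexes, to replace $X_*$ by a quasi-isomorphic complex of free or semi-induced $\FI$-modules whose generators are controlled by the hyperhomology hypothesis. Then bound the kernels of maps between such modules.
\item Work with invariants that behave well under subquotients and extensions, such as the stable and local degrees of Church--Miller--Nagpal--Reinhold, and convert to $t_0$ and $t_1$ only at the end.
\end{itemize}
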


\section{Lemmas for co-FI Spaces}
This section will provide proofs of general lemmas about the $\FI$-hyper homology of semi-simplicial co-$\FI$ spaces and homotopy quotients.

\begin{lemma} \label{lemma 1}
Let $W_\bullet$ be a co-$\FI$ semi-simplicial space and assume $\FIi(C^{-*}W_p)=0$ for $n>-2i$, then $\FIi(C^{-*}||W_\bullet||)=0$ for $n>-2i$.
\end{lemma}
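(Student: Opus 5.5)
We will use the skeletal filtration of the geometric realization, together with the long exact sequence of \HFI-hyperhomology from the Proposition above. Here $C^{-*}$ denotes cochains regraded to be nonpositive, so that a co-$\FI$ space yields an $\FI$-chain complex. For a semi-simplicial co-$\FI$ space $W_\bullet$, the realization $||W_\bullet||$ carries the filtration by skeleta
\[
  F_0 \subseteq F_1 \subseteq \cdots, \qquad F_q = ||W_{\le q}||,
\]
and each piece is natural in the $\FI$-variable. Each pair has a relative homeomorphism
\[
  (F_q, F_{q-1}) \simeq (W_q \times \Delta^q,\; W_q \times \partial\Delta^q),
\]
so the relative cochains satisfy $C^{*}(F_q,F_{q-1}) \simeq C^{*-q}(W_q)$. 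In the regrading $C^{-*}$ this is a shift down by $q$: we get an $\FI$-chain complex quasi-isomorphic to $C^{-*}W_q[-q]$, where the relevant degree $i$ becomes $i+q$. Hence
\[
  \FIi\bigl(C^{-*}(F_q,F_{q-1})\bigr) \cong \mathbb{H}^{\text{FI}}_{i+q}(C^{-*}W_q).
\]
By hypothesis this vanishes for $n > -2(i+q)$. Since $q \ge 0$ we have $-2(i+q) \le -2i$, so it vanishes a fortiori for $n > -2i$. The cohomological shift only improves the range.

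The argument then proceeds in three steps.

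\textbf{Step one.} Apply the Proposition to the short exact sequences of $\FI$-chain complexes
\[
  0 \to C^{-*}(F_q,F_{q-1}) \to C^{-*}(F_q) \to C^{-*}(F_{q-1}) \to 0,
\]
which are natural, hence $\FI$-equivariant. Inducting on $q$, starting from $F_0 = W_0$, the long exact sequence shows that $\FIi(C^{-*}F_q) = 0$ for $n > -2i$ for every finite $q$. This requires vanishing in degree $i$ of both outer terms at the same $n$, which the bounds give.

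\textbf{Step two.} Pass to the colimit, $||W_\bullet|| = \operatorname{colim}_q F_q$, so that $C^{-*}||W_\bullet||$ is quasi-isomorphic to the homotopy limit $\operatorname{holim}_q C^{-*}F_q$. One approach is the Milnor $\lim^1$ sequence applied objectwise after $C^{\FI}_*$. Because $C^{\FI}_p(-)_S$ is a finite direct sum, it commutes with limits. Moreover, in each fixed total degree only finitely many skeleta matter: $C^{j}(F_q) \to C^{j}(F_{q-1})$ is a quasi-isomorphism in the range $j < q-1$.

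The cleaner route is to observe that the restriction map $C^{-*}||W_\bullet|| \to C^{-*}F_q$ induces an isomorphism on cohomology in degrees $< q$. Fix $n$ and $i$. The hyperhomology $\FIi(-)_n$ involves only cochain degrees bounded in terms of $i$ and $n$: the $C^{\FI}_p$ direction has $0 \le p \le n$, so the total degree $i$ forces the cochain degree to lie in a bounded window. Comparing via a spectral sequence, or the five lemma on the bounded window, we get
\[
  \FIi(C^{-*}||W_\bullet||)_n \cong \FIi(C^{-*}F_q)_n \quad \text{for } q \gg n - i.
\]

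\textbf{Step three.} Combining these gives $\FIi(C^{-*}||W_\bullet||) = 0$ for $n > -2i$.

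The main obstacle is the colimit step. Cochains turn the colimit into a limit, so exactness and $\lim^1$ issues must be controlled. The degree-window argument (finite $n$, hence finitely many $p$, hence a bounded range of cochain degrees) is what I expect to make this rigorous. The identification of the relative cochains with the shifted $C^{-*}W_q$, in a way compatible with the $\FI$-structure, is routine but must be stated carefully, since the co-$\FI$ structure maps act levelwise and commute with the face maps.
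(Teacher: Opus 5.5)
Your proposal is correct and is essentially the paper's argument. The paper filters the Bendersky--Gitler double-complex model of $C^{-*}(||W_\bullet||)$ by columns, and its quotients $C_*/\cF^k_*$ are the algebraic counterparts of your skeleta $C^{-*}F_{-k-1}$: its downward induction on $k$ through the long exact sequence is your Step one, including the point that the shift by simplicial degree only enlarges the vanishing range, and its appeal to Miller--Wilson's Lemma 4.5 to identify $\FIi(C_*)_n$ with $\FIi(C_*/\cF^k_*)_n$ for $k$ sufficiently negative is your degree-window argument in Step two. The only real difference is that the double complex makes the identification of the filtration quotients with shifted $C^{-*}W_q$ tautological, whereas your geometric route needs the routine $\FI$-natural excision/K\"unneth identification of $C^{*}(F_q,F_{q-1})$ that you flag.
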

\begin{proof} (Note: this proof follows the proof structure of \cite[Proposition 4.7] {miller2026fihyperhomologyorderedconfigurationspaces}.)
Let $C^{*,*}$ be the cohomologically graded double complex with $C^{p,q}=C^{-q}(W_{-p})$ and let $C_*$ be the total complex of $C^{*,*}$. Bendersky--Gitler \cite[Proposition 1.2]{MR1010881} constructed a natural equivalence (see also Miller--Wilson \cite[Proposition 4.6]{miller2026fihyperhomologyorderedconfigurationspaces})  $$C_* \overset{\simeq}{\longrightarrow} C^{-*}(||W_\bullet||)$$ which induces $$H_i(C_*) \cong H^i(||W_\bullet||).$$ 
It is enough to show $$\FIi(C_*)_n = 0 \text{ for } n > -2i.$$ \\
Consider the filtration $$\cF_{p,q}^k = \begin{cases}
                    C_{p,q} & p \le k \\
                    0 & else.
                \end{cases}$$
Notice that $\cF_{*}^{0}=C_*$ and $\cF_{*}^{k} \subseteq \cF_{*}^{k+1}$. \\
Note that the map $$C_j \longrightarrow C_j \slash \cF_{j}^{k}$$ is an isomorphism for $j \ge k+1$. Thus, $$H_j(C_*) \longrightarrow H_j(C_j \slash \cF_{j}^{k})$$ is an isomorphism for $j \ge k+2$. So, for $n>-2i$ and $i \ge 2k+3$, \cite[\color{red}Lemma 4.5]{miller2026fihyperhomologyorderedconfigurationspaces} tells us that the map $$\FIi(C_*)_n \longrightarrow \FIi(C_* \slash \cF_*^k)_n$$ is an isomorphism. Recall $C^{-*}(||W_\bullet||)$ is equivalent to $C_*$, so $\FIi(C_*)_n=\FIi(C^{-*}(||W_\bullet||))_n$. \\ 

So the problem reduces to showing that $$\FIi(C_* \slash \cF_*^k)_n \cong 0 \text{ for } n>-2i$$ for all $k$. We will do this using induction on $k$. \\

\noindent \textbf{Base Case:} If $k \ge 0$, $C_* \slash \cF_*^k=0$, so $\FIi(C_* \slash \cF_*^k)_n \cong 0$. So fix $k<0$. \\
\textbf{Induction Hypothesis}: Assume $\FIi(C_* \slash \cF_*^{k+1}) \cong 0$ and we will show the result for $k$ (since $k<0$, we are counting towards $-\infty$). \\
\textbf{Induction Step}: Consider the following short exact sequence: $$0 \longrightarrow \cF_*^{k+1} \slash \cF_*^k \longrightarrow C_* \slash \cF_*^k \longrightarrow C_* \slash \cF_*^{k+1} \longrightarrow 0$$ which induces the following long exact sequence on $\FI$-hyper homology: 
\begin{align*}
    \hspace{2cm}& & & \hspace{2cm} \dots & \longrightarrow \HFI_{i+1}(C_* \slash \cF_*^{k+1})_n & \hspace{5cm}\\ \\
    \hspace{2cm}&\longrightarrow \FIi(\cF_*^{k+1} \slash \cF_*^k)_n && \longrightarrow \FIi(C_* \slash \cF_*^k)_n & \longrightarrow \FIi(C_* \slash \cF_*^{k+1})_n & \hspace{5cm} \\ \\
    \hspace{2cm}& \longrightarrow \HFI_{i-1}(\cF_*^{k+1} \slash \cF_*^k)_n && \longrightarrow \dots \text{ } &    
\end{align*}

\noindent Notice that $$\cF_*^{k+1} \slash \cF_*^k \cong C^{-q-k-1}(W_{k+1}).$$ Also, by assumption, $$\FIi(C^{-*}(W_k))_n = 0$$ for all $k$ and $n>-2i$, thus $$\FIi(\cF_*^{k+1} \slash \cF_*^k)_n \cong \FIi(C^{-*-k-1}(W_{k+1}))_n \cong \HFI_{i+k+1}(C^{-*}(W_{k+1}))_n=0$$ for $n>-2(i+k+1)$. Therefore, $$\FIi(C_* \slash F_*^k)_n \cong 0$$ for $n>-2i$.
\end{proof}

The following is a corollary to \autoref{lemma 1}.

\begin{corollary} \label{lemma 2}
    Let X be a co-$\FI$ space and Y be a space. If $\FIi(C^{-*}X)_n = 0 \text { for } n>-2i$, then $\FIi(C^{-*}(X \times Y))_n = 0 \text{ for } n>-2i$.
\end{corollary}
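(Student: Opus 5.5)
The plan is to deduce \autoref{lemma 2} from \autoref{lemma 1} by writing $X \times Y$ as the geometric realization of a co-$\FI$ semi-simplicial space whose levels are disjoint unions of copies of $X$.

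\textbf{Step 1: resolve $Y$.} Replace $Y$ by a weakly equivalent semi-simplicial set. Take $Y_\bullet = \mathrm{Sing}(Y)$ with its degeneracies forgotten, so that $\|Y_\bullet\| \simeq Y$. Then set
$$W_p = X \times Y_p = \coprod_{\sigma \in Y_p} X.$$
Each $W_p$ is a co-$\FI$ space, since $\FI$ acts only on the $X$ factor. The face maps are $\mathrm{id}_X \times d_i$, and these commute with the co-$\FI$ structure, so $W_\bullet$ is a co-$\FI$ semi-simplicial space. Because realization commutes with products by a fixed space (for compactly generated spaces), we get
$$\|W_\bullet\| \cong X \times \|Y_\bullet\| \simeq X \times Y.$$
The second equivalence is a weak equivalence, natural in the co-$\FI$ variable. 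It therefore induces a quasi-isomorphism of co-$\FI$ cochain complexes, which gives an isomorphism on $\FI$-hyperhomology.

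\textbf{Step 2: check the hypothesis levelwise.} We need $\FIi(C^{-*}W_p)_n = 0$ for $n > -2i$. Singular cochains turn disjoint unions into products, so
$$C^{-*}(W_p) = \prod_{\sigma \in Y_p} C^{-*}(X).$$
The functor $C^{\FI}_*(-)_S$ is a finite direct sum over subsets $T \subseteq S$. It therefore commutes with arbitrary products, and so does taking the total complex, since there are only finitely many columns $p \le |S|$. Homology also commutes with products. Hence
$$\FIi\Big(\prod_\sigma C^{-*}X\Big)_n \cong \prod_\sigma \FIi(C^{-*}X)_n = 0 \quad \text{for } n > -2i.$$

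\textbf{Step 3: conclude.} \autoref{lemma 1} now gives $\FIi(C^{-*}\|W_\bullet\|)_n = 0$ for $n > -2i$. Step 1 transfers this vanishing to $X \times Y$.

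\textbf{Main obstacle.} The delicate point is Step 2's interchange of $\FI$-hyperhomology with the infinite product. The reason it works is the finiteness of $C^{\FI}_*(-)_S$ in each degree for a fixed finite set $S$. This argument needs to be written carefully, including noting that the total complex involves only finitely many terms in each total degree.

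Alternatively, one can avoid infinite products by taking a CW or simplicial model of $Y$ with finitely many simplices in each degree, when such a model is available. In general, though, the product argument is the robust route.
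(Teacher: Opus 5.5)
Your proposal is correct and follows essentially the same route as the paper: resolve $Y$ by a semi-simplicial set, write each level $W_p = X \times Y_p$ as a disjoint union of copies of $X$, move the finite sums in $C^{\FI}_*(-)_S$ and then homology past the resulting product of cochain complexes, and apply \autoref{lemma 1}. Your version is slightly more careful than the written proof in two places: you correctly write $C^{-*}(W_p)$ as a product, where the paper writes a direct sum, and you explicitly justify the identification of $\|W_\bullet\|$ with $X \times Y$ on $\FI$-hyperhomology.
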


\begin{proof}
 Let $Y_\bullet$ be a semi-simplical set such that $Y \underset{w.e.}{\simeq} \vert Y_\bullet \vert$. 
 
 In order to apply \autoref{lemma 1}, we will let $W_\bullet = X \times Y_\bullet$ and check that $\FIi(C^{-*}W_p)=0$ for $n>-2i$.
    
    Notice that since $Y_p$ is a discrete set, $W_p= X \times Y_p =\underset{\alpha \in Y_p}{\bigsqcup} X$. So, $$\FIi(C^{-*}(W_p)_n=\FIiC(\underset{\alpha \in Y_p}{\bigsqcup} X))_n=H_i(C^{\FI}_{*}(C^{-*}(\underset{\alpha \in Y_p}{\bigsqcup} X))_n)=H_i(C^{\FI}_{*}(\underset{\alpha \in Y_p}{\bigoplus} C^{-*}(X))_n)$$ \hspace{0.5cm} Since $C^{\FI}_*$ is made up of finite direct sums, and finite direct sums commute with products, $$H_i(C^{FI}_{*}(\underset{\alpha \in Y_p}{\bigoplus} C^{-*}(X))_n)=H_i(\underset{\alpha \in Y_p}{\Pi} (C^{\FI}_{*}(C^{-*}(X))_n))$$ \hspace{0.5cm} Since homology commutes with products (see Weibel  \cite[Exercise 1.2.1]{MR1269324}), $$H_i(\underset{\alpha \in Y_p}{\Pi} (C^{\FI}_{*}(C^{-*}(X))_n))=\underset{\alpha \in Y_p}{\Pi} (H_i(C^{\FI}_{*}(C^{-*}(X))_n))=\underset{\alpha \in Y_p}{\Pi} (\FIi((C^{-*}(X)))_n).$$ Now $$\FIi(C^{-*}(X))_n=0 \text{ for } n > -2i$$ implies $$\underset{\alpha \in Y_p}{\Pi} (\FIi((C^{-*}(X)))_n)=0 \text{ for } n > -2i.$$
\end{proof}

\begin{theorem} \label{equivariant chain complex}
    Let $G$ be a group acting on a manifold, $M$, then $\FIi(C^{-*}(\PConf_n(M) \sslash G))=0$ for $n > -2i$.
\end{theorem}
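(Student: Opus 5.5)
Write the homotopy quotient via the bar construction: $\PConf_n(M)\sslash G \simeq \|W_\bullet\|$ with $W_p = \PConf_n(M) \times G^p$, whose face maps are built from the action and the multiplication of $G$. The plan is to apply \autoref{lemma 1} to this semi-simplicial space, using \autoref{lemma 2} to handle each level $W_p$. The input is the non-equivariant vanishing $\FIi(C^{-*}\PConf(M))_n = 0$ for $n>-2i$. This is the key result of Miller--Wilson \cite{miller2026fihyperhomologyorderedconfigurationspaces}, which underlies the representation stability theorem quoted in the introduction, and I will assume it.

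The first step is to check that $W_\bullet$ is a co-$\FI$ semi-simplicial space. $\PConf_S(M)$ is a co-$\FI$ space: an injection $S\hookrightarrow T$ induces the map $\PConf_T(M)\to \PConf_S(M)$ that forgets points. Since $G$ acts diagonally, this forgetful map is $G$-equivariant. The face maps of the bar construction are $d_0(x,g_1,\dots,g_p)=(g_1x,g_2,\dots,g_p)$, the multiplications $g_ig_{i+1}$, and the map dropping $g_p$. All of these commute with the forgetful maps, so each $W_p=\PConf(M)\times G^p$ is a co-$\FI$ space, and the face maps are natural transformations of co-$\FI$ spaces. The geometric realization $\|W_\bullet\|$ is then levelwise the Borel construction $EG\times_G \PConf_S(M)$, naturally in $S$. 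It models $\PConf_S(M)\sslash G$ up to natural weak equivalence. Here $G$ may be a topological group such as $S^3$, and we use the fat realization, so no cofibrancy issues arise.

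Next, \autoref{lemma 2} with $X=\PConf(M)$ and $Y=G^p$ gives $\FIi(C^{-*}W_p)_n=0$ for $n>-2i$ for every $p$. Then \autoref{lemma 1} gives $\FIi(C^{-*}\|W_\bullet\|)_n=0$ for $n>-2i$. A natural weak equivalence of co-$\FI$ spaces induces a quasi-isomorphism of $\FI$-cochain complexes. $\FI$-hyperhomology is invariant under such quasi-isomorphisms, by the spectral sequence or long exact sequence argument applied to the mapping cone. Hence the vanishing transfers to $C^{-*}(\PConf(M)\sslash G)$.

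I expect the main obstacle to be the input, not the formal steps. If we cannot quote Miller--Wilson in exactly the hyperhomology form $n>-2i$, we would have to reprove it. That would go through their semi-simplicial resolution of $\PConf$ by injective-word or arc complexes, which is substantial. A secondary subtlety is making sure the bar construction is functorial in $S$ strictly, not just up to homotopy. This holds because the action commutes strictly with forgetting points.
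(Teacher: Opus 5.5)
Your proposal is correct and is essentially the paper's proof. Like the paper, you realize $\PConf_n(M)\sslash G$ via the bar construction with $W_p = G^p\times\PConf_n(M)$, get levelwise vanishing from Miller--Wilson via \autoref{lemma 2} with $Y=G^p$, and conclude with \autoref{lemma 1}; your remarks on naturality, fat realization and quasi-isomorphism invariance supply details the paper leaves implicit. One small slip: with $x$ written on the left, the face map $d_0(x,g_1,\dots,g_p)=(g_1x,g_2,\dots,g_p)$ violates $d_0d_1=d_0d_0$ for nonabelian $G$ such as $S^3$ unless the action is a right action, so either write $xg_1$ or order the factors as $B_\bullet(\{pt\},G,\PConf_n(M))$, as the paper does.
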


\begin{proof}
    Notice that $\PConf_n(M) \sslash G$ is the geometric realization of the two sided bar construction: $$\big|\big|B_\bullet \big( \{pt\}, G, \PConf_n(M)\big)\big|\big|.$$ Let $W_p \coloneq G^p \times \PConf_n(M)$ $\left(\text{note that } G^p=B_p\big(\{pt\}, G, \PConf_n(M)\big) \right)$. If it can be shown that $\FIi\big(C^{-*}(W_p)\big)=0$ for $n>-2i$, then applying \autoref{lemma 1} will give the desired outcome. \\
    Miller-Wilson, \cite[Proposition 4.7]{miller2026fihyperhomologyorderedconfigurationspaces}, proved that $$\FIi\big(C^{-*}(\PConf_n(M)\big)=0 \text{ for } n>-2i.$$ Since $\PConf_n(M)$ is a co-$\FI$ space and $G^p$ is a space for all $p$, applying \autoref{lemma 2} to gives $$\FIi\big(C^{-*}(G^p \times \PConf_n(M)\big)=0$$ for all $p$ when $n>-2i$.
\end{proof}

\section{Applications to Configuration Spaces}

This section will provide proofs of the main theorems of this paper. The strategy will be to first focus on the results pertaining to ordered configuration spaces and pure braid groups. We will then use those theorems, along with the work of Nagpal and Guès, to determine the stability patterns for spherical braid groups and the equivariant homology of unordered configuration spaces.

\subsection{Ordered configuration spaces and pure braid groups}
In this section, we will prove \autoref{PBr Rep Stab}, \autoref{Equivariant Rep Stab}, and \autoref{PBr Polynomial}.
The following is identical to \autoref{Equivariant Rep Stab} and is being restated for convenience.

\begin{theorem}
Let $G$ be a group acting on a manifold $M$. For $i \geq 0$, $$\HH^i(\PConf_n(M) \sslash G)$$ is generated in degree $\le 4i$ and presented in degree $\le 4i+1$.
\end{theorem}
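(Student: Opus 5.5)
The plan is to combine \autoref{equivariant chain complex} with \autoref{Gues generation and presentation}. First I regard $n \mapsto C^{-*}(\PConf_n(M) \sslash G)$ as an $\FI$-chain complex. Forgetting points gives $\PConf_n(M)$ the structure of a co-$\FI$ space, and this structure is compatible with the diagonal $G$-action, since $G$ acts on each coordinate and forgetting coordinates is $G$-equivariant. Hence the Borel construction $\PConf_\bullet(M) \sslash G$ is again a co-$\FI$ space, with functoriality coming from the two-sided bar construction used in the proof of \autoref{equivariant chain complex}. Applying cochains, which are contravariant, turns this into a covariant $\FI$-chain complex $X_* = C^{-*}(\PConf_\bullet(M)\sslash G)$, concentrated in nonpositive homological degrees. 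Its homology satisfies $H_{-i}(X_*)_n = \HH^i(\PConf_n(M)\sslash G)$.

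Next, \autoref{equivariant chain complex} gives $\FIi(X_*)_n = 0$ for $n > -2i$. In the notation of \autoref{Gues generation and presentation} this is the vanishing hypothesis with $a = 2$ and $b = 0$. The theorem then says that $H_j(X_*)$ is generated in degrees $\le \max(0, 4j)$ and presented in degrees $\le \max(0, 4j+1)$. I need to be careful with the index convention: the conclusion should be read in terms of the cohomological degree. Since $X_*$ lives in degree $j = -i \le 0$, I expect the intended reading of the cited result (as in Miller--Wilson's use for $C^{-*}(\PConf_n(M))$) to be that $H_{-i}(X_*) = \HH^i$ is generated in degree $\le 4i$ and presented in degree $\le 4i+1$. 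Substituting $j=-i$ then gives exactly the claimed bounds.

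The main obstacle is this bookkeeping step. I must check that the sign conventions in \autoref{Gues generation and presentation}, where the hypothesis reads $n > -ai + b$, produce the bound $2a\cdot i$ in the cohomological index rather than something degenerate like $\max(0,-4i) = 0$. To settle it, I would return to Guès's formulation, or compare with Miller--Wilson's identical deduction for $\HH^i(\PConf_n(M))$ from $\FIi(C^{-*}\PConf_n(M))=0$ for $n>-2i$. I would then confirm that the same translation applies verbatim here, since our vanishing range is identical to theirs.

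A secondary point is verifying the naturality of the Bendersky--Gitler comparison in \autoref{lemma 1}, so that the $\FI$-structure on $C^{-*}(\|W_\bullet\|)$ agrees with that on $X_*$. I take this as already established by the preceding results.
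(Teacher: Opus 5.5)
Your proposal is correct and is exactly the paper's argument: set $X_* = C^{-*}(\PConf(M)\sslash G)$, use \autoref{equivariant chain complex} for the vanishing $\HFI_i(X_*)_n = 0$ when $n > -2i$, and apply \autoref{Gues generation and presentation} with $a=2$, $b=0$. Your worry about the index convention is well founded, since read literally with $i \le 0$ the transcribed bound $\max(0,2ai+2b)$ would degenerate to $0$. The paper silently uses the intended reading in the cohomological degree, which gives exactly $4i$ and $4i+1$, and your remarks on the co-$\FI$ structure of the Borel construction supply details the paper leaves implicit.
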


\begin{proof}
    Let $X_* = C^{-*}(\PConf(M) \sslash G)$. \autoref{equivariant chain complex} tells us that $C^{-*}(\PConf(M) \sslash G)$ satisfies the assumptions of \autoref{Gues generation and presentation} for a=2 and b =0. That gives us the stated generation and presentation degree for $\HH^i(\PConf(M) \sslash G)$.
\end{proof}

Since the equivariant homology of ordered configuration spaces exhibits representation stability, \autoref{Jon and Fred} can be used to complete the proof of \autoref{PBr Rep Stab}, which is restated below.

\begin{theorem} 
    Let $\Sigma$ be $S^2$ or $\RP^2$. For $i \geq 0$, then $$\HH^i(\PBr_n({\Sigma}))$$ is generated in degree $\le 4i$ and presented in degree $\le 4i+1$.
\end{theorem}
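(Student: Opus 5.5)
The plan is to transport the representation stability statement of \autoref{Equivariant Rep Stab} across the isomorphism of \autoref{Jon and Fred}. First, I will specialize \autoref{Equivariant Rep Stab} to $M=\Sigma$ and $G=S^3$, with $S^3$ acting on $S^2$ by rotation through the double cover $S^3\to SO(3)$. For $\RP^2$ I will use the induced action: the antipodal map lies in $O(3)$ and commutes with $SO(3)$, so the rotation action descends to $\RP^2=S^2/\pm 1$. This gives that $n\mapsto \HH^i(\PConf_n(\Sigma)\sslash S^3)$ is generated in degree $\le 4i$ and presented in degree $\le 4i+1$ as an $\FI$-module.

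Second, I will upgrade \autoref{Jon and Fred} from a levelwise isomorphism to an isomorphism of $\FI$-modules, at least in the range $n\ge 3$. The co-$\FI$ structure on $\PConf_\bullet(\Sigma)$ is given by forgetting points, and it is $S^3$-equivariant. So it induces maps of homotopy quotients, and hence an $\FI$-module structure on equivariant cohomology. On the group side, forgetting strands gives maps $\PBr_n(\Sigma)\to\PBr_m(\Sigma)$ for injections $[m]\hookrightarrow[n]$, and these make $\HH^i(\PBr_\bullet(\Sigma))$ an $\FI$-module. I would argue that the Cohen--Pakianathan comparison is natural with respect to these maps. Their isomorphism should come from the map $\PConf_n(\Sigma)\sslash S^3\to B\pi_1(\PConf_n(\Sigma)\sslash S^3)$. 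Using the fibration $\PConf_n(\Sigma)\to \PConf_n(\Sigma)\sslash S^3\to BS^3$, with $S^3$ simply connected, one gets $\pi_1(\PConf_n(\Sigma)\sslash S^3)=\PBr_n(\Sigma)$. Both the fibration and the classifying map are functorial in forgetting points, so the isomorphisms commute with the $\FI$-structure maps.

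Third, I will handle the small values $n<3$, where \autoref{Jon and Fred} need not hold. Generation and presentation degree are defined by the vanishing of $\HH^{\FI}_0$ and $\HH^{\FI}_1$ at sets of size $n>d$. These groups at size $n$ depend only on the values of the $\FI$-module at sizes $n$, $n-1$, and $n-2$ (since $C^{\FI}_p$ involves $N_{S-T}$ with $|T|=p$). For $i\ge 1$ we have $4i\ge 4$, so every relevant size exceeds $4i-1\ge 3$, and there the two $\FI$-modules agree. For $i=0$ both modules are the constant module $\Z$, because both spaces are connected, and the constant module is generated in degree $0$ and presented in degree $\le 1$. This yields the stated bounds.

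I expect the main obstacle to be the naturality in the second step. \autoref{Jon and Fred} is quoted only as an abstract isomorphism, so I must confirm that it is induced by the classifying map described above, which is compatible with forgetting strands. This amounts to showing that $\PConf_n(\Sigma)\sslash S^3$ is aspherical for $n\ge3$. I would first check this against the construction in Cohen--Pakianathan's Corollaries 6.16 and 7.15.
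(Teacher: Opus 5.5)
Your proposal is correct and follows the same route as the paper, which just combines the Cohen--Pakianathan isomorphism $\HH^i(\PConf_n(\Sigma)\sslash S^3)\cong \HH^i(\PBr_n(\Sigma))$ for $n\ge 3$ with \autoref{Equivariant Rep Stab} applied to $G=S^3$. Your two extra checks fill in points the paper leaves implicit: compatibility of the isomorphism with the $\FI$-structure (via asphericity of $\PConf_n(\Sigma)\sslash S^3$ and naturality of the classifying map), and the harmlessness of the failure for $n<3$, since $\HFI_0$ and $\HFI_1$ at size $n$ only involve sizes $n,n-1,n-2$.
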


\begin{proof}
    Let $\Sigma=S^2$ or $\RP^2$ and let $G = S^3$. By \autoref{Jon and Fred}, $$\HH^i(\PConf_n(\Sigma) \sslash S^3) \cong \HH^i(\PBr_n(\Sigma))$$ for $n \ge 3$. \autoref{Equivariant Rep Stab} gives us the desired generation and presentation degree.
\end{proof}

 \autoref{PBr Polynomial} follows immediately from \autoref{PBr Rep Stab} and \cite[Proposition 3.1 and Proposition 2.14]{MR3818071}.

\subsection{Unordered configuration spaces and braid groups}
In order to give a proof of  \autoref{Br Stab Period}, it is necessary to review the following result of Guès 
\cite[Proposition 3.13]{gues2025periodicityhomologymodulispaces} 
, which is an improvement on early work by Nagpal \cite{MR3358218}.

\begin{proposition}[Guès] \label{Nic Prop 3.13}
    Let X be a co-$\FI$ space and let $S_n$ act freely on $X_n$, and suppose \begin{align*}
        & \HH^i(X) \text{ is generated in } deg\le ai+b \text{ and} \\
        & \HH^i(X) \text{ is presented in } deg\le ai+b +1.
    \end{align*} Then $$\HH^i(X_n \sslash S_n; \bF_p)=\HH^i(X_{n+p^j} \sslash S_{n+p^j}; \bF_p),$$ where $p^j>ai+b$ for a prime $p$ and $n\ge 2ai+2b+1$ if $a>2$.
\end{proposition}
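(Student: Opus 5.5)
The plan is to reduce the statement to periodicity for the group cohomology of symmetric groups with coefficients in finitely presented $\FI$-modules, in the spirit of Nagpal's argument, keeping track of the bounds $ai+b$.

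\textbf{Step 1: model the homotopy quotient.} Since $S_n$ acts freely, $X_n \sslash S_n \simeq X_n/S_n$. Moreover $C^*(X_n \sslash S_n;\bF_p)$ is computed by the group hypercohomology $\mathbb{H}^*(S_n; C^*(X_n;\bF_p))$. This gives the Cartan--Leray spectral sequence
$$E_2^{s,t} = \HH^s\big(S_n; \HH^t(X_n;\bF_p)\big) \Longrightarrow \HH^{s+t}(X_n \sslash S_n;\bF_p).$$

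\textbf{Step 2: pass to $\bF_p$ coefficients.} By universal coefficients, $\HH^t(X;\bF_p)$ sits in a short exact sequence of $\FI$-modules between $\HH^t(X)\otimes \bF_p$ and $\mathrm{Tor}(\HH^{t+1}(X),\bF_p)$. Generation and presentation degrees behave well under cokernels and extensions, so $\HH^t(X;\bF_p)$ is presented in degree $\le a(t+1)+b+1$ or so. I expect to absorb this shift by working with the integral cochain complex and its $\FI$-hyperhomology instead, which is exactly what \autoref{Gues generation and presentation} is built from.

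\textbf{Step 3: reduce to semi-induced modules.} For a finitely presented $\FI$-module $V$ over $\bF_p$ of presentation degree $\le d$, the shifted modules $V_{\bullet+N}$ are semi-induced, that is, filtered by induced modules $M(W)_n = \mathrm{Ind}_{S_k\times S_{n-k}}^{S_n} W$ with $k\le d$, once $N$ is large. A Nagpal--type resolution expresses $V_n$ in the stable range by a finite complex of such induced modules. Shapiro's lemma then gives
$$\HH^s(S_n; M(W)_n) \cong \HH^s(S_k\times S_{n-k}; W),$$
so it suffices to compare $\HH^*(S_{n-k};\bF_p)$ with $\HH^*(S_{n-k+p^j};\bF_p)$.

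\textbf{Step 4: compare the two sides.} This comparison is not literal stability but a periodicity map. I would use the transfer/Nagpal periodicity map from $S_m$ to $S_{m+p^j}$, which is an isomorphism in the relevant degrees once $p^j>d$ and $m$ is large relative to the cohomological degree (Nakaoka-type ranges). These maps are natural in the induced pieces, so they assemble into maps of spectral sequences and, via the five lemma along the finite filtration, into an isomorphism on $E_2^{s,t}$ for $s+t=i$. Here $t\le i$ forces $d\le ai+b$, which yields the hypothesis $p^j>ai+b$. The range $n\ge 2ai+2b+1$ should come from requiring $n$ to exceed twice the presentation degree, so that the semi-induced description holds in that range.

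\textbf{Main obstacle.} The hardest step is constructing comparison maps $X_n\sslash S_n \rightsquigarrow X_{n+p^j}\sslash S_{n+p^j}$ that exist at the chain level and are compatible with the spectral sequence. The co-$\FI$ structure only gives restriction maps, not maps in the increasing direction. My first attempt would be to define the periodicity map on the hypercohomology side using the $\FI$-hyperhomology vanishing: replace $C^{-*}(X)$, in a range, by a complex of semi-induced modules, and define the map termwise via Shapiro's lemma. This avoids needing any geometric map between the spaces.
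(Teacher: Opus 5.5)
The paper gives no proof of this proposition; it is quoted from Gu\`es, who refines Nagpal. Your sketch therefore has to stand on its own. Your toolkit is the right one: hypercohomology spectral sequence, semi-induced replacement, Shapiro and Nakaoka. But there is a genuine gap at the core, because Steps~3--4 put the periodicity in the wrong place. For an induced module, Shapiro and K\"unneth give $\HH^s(S_n;M(W)_n)\cong\bigoplus_{a+b=s}\HH^a(S_k;W)\otimes\HH^b(S_{n-k};\bF_p)$. By Nakaoka this is \emph{stable} in $n$, for every shift, and $p^j$ plays no role. So ``it suffices to compare $\HH^*(S_{n-k};\bF_p)$ with $\HH^*(S_{n-k+p^j};\bF_p)$'' cannot be right. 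If matching the pieces sufficed, you would have proved homological stability of $\HH^*(X_n\sslash S_n;\bF_p)$. That fails for $X=\PConf(S^2)$: from $\HH_1(\UConf_n(S^2))\cong\Z/(2n-2)$, recalled in the paper, for odd $p$ one gets $\HH^1(\UConf_n(S^2);\bF_p)\neq 0$ iff $n\equiv 1\pmod p$.

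The periodicity lives in the \emph{maps} between induced pieces. These are the differentials of your replacement complex and the connecting maps of the semi-induced filtrations, running from $M(W)$ to $M(W')$ with $k\ge k'$. After Shapiro they become transfer-type maps; in degree $0$ they are multiplication by binomial coefficients such as $\binom{n-k'}{k-k'}$. They match under Nakaoka's identifications for $n$ and $n+p^j$ because $(1+x)^{p^j}\equiv 1+x^{p^j}$, i.e.\ $\binom{m+p^j}{r}\equiv\binom{m}{r}\pmod p$ for $r<p^j$. That, not the bound $t\le i$, is where a condition like $p^j>ai+b$ comes from.

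Your comparison map is also not an isomorphism. Inflate from $S_m$ to $S_m\times S_{p^j}$ and then transfer to $S_{m+p^j}$. In degree $0$ this is multiplication by $\binom{m+p^j}{m}\equiv m_j+1\pmod p$, where $m_j$ is the $j$-th $p$-adic digit of $m$. It vanishes whenever $m_j=p-1$, however large $p^j$ is.

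The same computation shows that the obstacle you flag is real and that your remedy does not remove it. Contrary to what you say, the co-$\FI$ structure plus transfer does give maps in the increasing direction, $\HH^*(X_n\sslash S_n;\bF_p)\to\HH^*(X_{n+p^j}\sslash S_{n+p^j};\bF_p)$: pull back along $X_{n+p^j}\sslash(S_n\times S_{p^j})\to X_n\sslash S_n$, then transfer. These maps are compatible with the Cartan--Leray spectral sequences. But in degree $0$ this map is multiplication by $\binom{n+p^j}{n}$, so the periodicity isomorphism is not induced by it.

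Defining comparison maps ``termwise via Shapiro's lemma'' only gives isomorphisms on the cohomology of each piece, not a morphism of filtered complexes. So neither the five lemma nor the comparison theorem for spectral sequences is available, and an abstract isomorphism of $E_2$-pages says nothing about $d_r$ for $r\ge 2$. A complete argument has to identify, for $n$ and $n+p^j$, the whole object computing $\HH^*(X_n\sslash S_n;\bF_p)$ in the relevant range, including the gluing data and not just the pieces. This is the part your sketch does not supply.

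Two further loose ends remain.
\begin{itemize}
\item If $\HH^i(X)$ means integral cohomology, the $\mathrm{Tor}(\HH^{t+1}(X),\bF_p)$ term in Step~2 is a kernel. Its generation and presentation degrees are only controlled, via regularity results, with bounds well above $a(t+1)+b$. Your plan to absorb this through $\FI$-hyperhomology is not available, since the hypotheses concern only the cohomology $\FI$-modules, not $C^{-*}(X)$.
\item The range $n\ge 2ai+2b+1$ is asserted rather than derived. At minimum it must contain Nakaoka's stable range for $\HH^s(S_{n-k};\bF_p)$ with $k\le ai+b$ and $s\le i+1$.
\end{itemize}
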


\begin{corollary} \label{unordered conf equivariant periodicity}
    $\HH^i(\UConf_n(M) \sslash G ; \bF_p) = \HH^i(\UConf_{n+p^j}(M) \sslash G; \bF_p)$ where $p^j > 4i-1$ and $n \ge 8i +1.$
\end{corollary}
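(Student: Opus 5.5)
The plan is to apply \autoref{Nic Prop 3.13} with $X$ the co-$\FI$ space $X_n = \PConf_n(M) \sslash G$, using $a=4$, $b=0$ (or $b=-1$ to account for the stated bound $p^j > 4i-1$). The steps are as follows.

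\textbf{Step 1: $X$ is a co-$\FI$ space.} Injections $[m] \hookrightarrow [n]$ induce forgetful maps $\PConf_n(M) \to \PConf_m(M)$. These are $G$-equivariant for the diagonal action, so they descend functorially to the Borel constructions $EG \times_G \PConf_n(M)$. Thus $X$ is a co-$\FI$ space.

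\textbf{Step 2: the free $S_n$-action.} The $S_n$-action on $\PConf_n(M)$ permutes coordinates. It commutes with the diagonal $G$-action, so it induces an action on $EG \times_G \PConf_n(M)$. To check freeness, suppose $\sigma$ fixes the class of $(e,x)$. Then $(e,\sigma x) = (eg, g^{-1}x)$ for some $g$. Since $G$ acts freely on $EG$, $g=1$, so $\sigma x = x$. Since $S_n$ acts freely on $\PConf_n(M)$, $\sigma = 1$.

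\textbf{Step 3: stability hypotheses.} By \autoref{Equivariant Rep Stab}, $\HH^i(X)$ is generated in degree $\le 4i$ and presented in degree $\le 4i+1$. This is exactly the hypothesis of \autoref{Nic Prop 3.13} with $a=4>2$ and $b=0$.

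\textbf{Step 4: identify the quotient.} We need
\[
X_n \sslash S_n \simeq \UConf_n(M) \sslash G .
\]
Since the $S_n$-action on $X_n$ is free, $X_n \sslash S_n \simeq X_n / S_n$, which is $EG \times_G (\PConf_n(M)/S_n) = EG \times_G \UConf_n(M)$, because the two actions commute. Alternatively, both sides are homotopy quotients of $\PConf_n(M)$ by the commuting product action of $G \times S_n$. Quotienting in either order gives the same space, and since $S_n$ acts freely on $\PConf_n(M)$, the homotopy quotient by $S_n$ is the honest quotient $\UConf_n(M)$.

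\textbf{Step 5: conclude.} \autoref{Nic Prop 3.13} now gives the periodicity isomorphism for $p^j > 4i$ and $n \ge 8i+1$. Here $p^j > 4i$ is the same condition as the stated $p^j > 4i-1$ when $p^j$ and $i$ are integers... except at $p^j = 4i$, which the stated bound allows. I would aim for the bound $p^j > 4i$ and check whether the indexing in Guès' statement permits the slight improvement to $p^j > 4i-1$ (e.g.\ by taking $b=-1$ in the generation degree while keeping the $n$-range). The $n$-range is $n \ge 2ai+2b+1 = 8i+1$.

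\textbf{Main obstacle.} The main obstacle is Step 4, the quotient identification, together with confirming that the homotopy quotient in \autoref{Nic Prop 3.13} is compatible with the co-$\FI$ structure. I would handle it via the $G \times S_n$ double Borel construction: model the homotopy quotient as $(EG \times ES_n) \times_{G \times S_n} \PConf_n(M)$ and project away $ES_n$ using freeness.
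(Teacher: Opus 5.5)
This is the paper's argument: take $X_n=\PConf_n(M)\sslash G$, feed the bounds of \autoref{Equivariant Rep Stab} ($a=4$, $b=0$) into \autoref{Nic Prop 3.13}, and identify $X_n\sslash S_n\simeq \UConf_n(M)\sslash G$ by commuting the two homotopy quotients. Your explicit check that $S_n$ acts freely on $X_n$ itself, and not just on $\PConf_n(M)$, is a detail the paper leaves implicit.

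On the range, the paper's proof also only gives $p^j>4i$, which is the range stated in \autoref{Br Stab Period}. So the discrepancy you noticed sits in the statement, not in your argument. You should drop the $b=-1$ idea, though. \autoref{Equivariant Rep Stab} gives no generation bound of $4i-1$, and such a bound is false already at $i=0$, where $\HH^0(X)$ is nonzero in degree $0$.
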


\begin{proof}
    Recall that $\UConf_n(M)=\PConf_n(M) \slash S_n$ for any manifold, $M$. Also note that since the action of $S_n$ on  $\PConf_n(M)$ is free, $$\PConf_n(M) \slash S_n \simeq \PConf_n(M) \sslash S_n.$$ Also, recall that the action of $S_n$ commutes with the action of $G$. So, since homotopy orbits of commuting group actions commute with each other, $$\UConf_n(M) \sslash G \simeq (\PConf_n(M) \sslash S_n) \sslash G \simeq (\PConf_n(M) \sslash G) \sslash S_n \simeq(\PConf_n(M) \sslash G) \sslash S_n.$$ Taking $X_n = \PConf_n(M) \sslash G$  and applying \autoref{Nic Prop 3.13} completes the proof.
\end{proof}

 \autoref{Br Stab Period} follows from applying \autoref{Jon and Fred} to \autoref{unordered conf equivariant periodicity}.

\printbibliography
\end{document}